\documentclass[11pt]{article}
\usepackage{amssymb,float,amsthm,hyperref,xcolor,amsmath,tipa,graphicx}
\hypersetup{
    colorlinks,
    citecolor=violet,
    filecolor=blue,
    linkcolor=blue,
    urlcolor=violet
}
\usepackage{array}
\usepackage{tikz}
\newtheorem{theorem}{Theorem}[section]
\newtheorem{lemma}[theorem]{Lemma}

\newtheorem{proposition}[theorem]{Proposition}

\theoremstyle{definition}
\newtheorem{remark}[theorem]{Remark}
\newtheorem{example}[theorem]{Example}
\newtheorem{question}[theorem]{Open Question}
\newtheorem{definition}[theorem]{Definition}

\newcommand{\half}{{\textstyle\frac12}}

\numberwithin{equation}{section}
\usepackage[margin=1in]{geometry}
\counterwithin{figure}{section}
\usepackage{setspace}

\title{Pattern avoidance in permutations and their rotations}
\author{ \"{O}mer E\u{g}ecio\u{g}lu\thanks{Department of Computer Science, University of California Santa Barbara, Santa Barbara, CA 93106, United States of America. Email: {\tt omer@cs.ucsb.edu}}
\and 
Collier Gaiser\thanks{Department of Mathematics, Community College of Aurora, Aurora, CO 80011,  United States of America. Email: {\tt colliergaiser@gmail.com}}
\and  Mei Yin\thanks{Department of Mathematics, University of Denver, Denver, CO 80208, United States of America. Email: {\tt mei.yin@du.edu} }} 
\date{}

\begin{document}
\maketitle

\begin{abstract}
A rotation of a permutation is a new permutation obtained by moving the first several terms of the permutation to the end of the permutation. A circular permutation is the set of all rotations of a permutation. The enumerations of permutations and circular permutations avoiding patterns of length three and four are well studied. 
In this paper, we provide exact formulas for the number of permutations whose first $k\geq 2$ 
rotations all avoid a given pattern of length three, as well as the number of permutations whose first three rotations respectively avoid the rotations of a given pattern of length three. In contrast to permutations and circular permutations avoiding patterns of length three, the Wilf-equivalence classes under study are entirely determined by complements and reverses. We also classify and enumerate permutations whose first two rotations avoid different patterns of length three.
\end{abstract}

{\small \textbf{Keywords:} pattern avoidance, permutations, rotations, Wilf-equivalence, enumeration} \\
\indent {\small \textbf{AMS 2020 subject classification:} 05A05; 05A15}

\section{Introduction}\label{Section:Intro}

For $n\in\mathbb{N}:=\{1,2,\ldots\}$, a (linear) permutation $p=(p(1),p(2),\ldots, p(n))$ 
of $\{1,2,\ldots,n\}$ is a sequence of $n$ distinct integers in $\{1,2,\ldots,n\}$. We use $S_n$ to denote the set of permutations of $\{1,2,\ldots,n\}$. 
When there is no confusion, we simply use the one-line notation and write  $p=p(1)p(2)\cdots p(n)$.

For any $p\in S_n$ and $q\in S_m$, if there exist $1\leq i_1<i_2<\cdots<i_m\leq n$ such that, for all $1\leq a<b\leq m$, we have $p(i_a)<p(i_b)$ if and only if $q(a)<q(b)$, 
then we say that $p$ contains $q$ as a pattern and 
that $p(i_1)p(i_2)\cdots p(i_m)$ is a $q$ pattern.
A permutation
$p$ is said to avoid $q$ if $p$ does not contain $q$ as a pattern. For example, the permutation $p=12453\in S_5$ contains the pattern $132$ because $p(1)p(3)p(5)=143$ is a $132$ pattern; however, $p$ avoids the pattern $321$.
We define $S_n(q)$ to be the set of permutations in $S_n$ avoiding $q$.

The interest in the study of 
pattern avoidance in permutations can be traced back to 
stack-sortable permutations in computer science \cite[Section 2.1]{Kitaev2011}. 
One of the earliest results is the enumeration of permutations avoiding $\sigma\in S_3$, 
i.e., patterns of length three. D. Knuth proved that the number 
of permutations in $S_n$ avoiding any given pattern of length three is 
counted by the Catalan number $C_n$
(see also \cite[Theorem 4.7]{Bona2022}).

\begin{theorem} {\rm \cite[p. 238]{Knuth1973}} \label{Theorem:ClassicalSingle3}
For all $n\geq1$ and $\sigma\in S_3$, we have
\[
|S_n(\sigma)|=C_n=\frac{1}{n+1} \binom{2n}{n}.
\]
\end{theorem}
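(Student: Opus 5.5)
The plan is to cut the six patterns of $S_3$ down to two by symmetry, count each of those two by a first-step decomposition giving the Catalan recurrence, and then show directly that the two counts agree.

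\textbf{Symmetry reduction.} Reversal $p\mapsto p(n)\cdots p(1)$ and complement $p(i)\mapsto n+1-p(i)$ are bijections of $S_n$. A permutation $p$ contains $q$ if and only if its reverse contains the reverse of $q$, and likewise for complements. Therefore $|S_n(\sigma)|$ is constant on orbits of the group generated by these two maps. Under that group, $S_3$ splits into two orbits: $\{123,321\}$ and $\{132,231,213,312\}$. So it suffices to show $|S_n(123)|=|S_n(132)|=C_n$.

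\textbf{The pattern $132$.} Let $p\in S_n(132)$ and write $p=\alpha\, n\, \beta$ with $n$ in position $k$. Every entry of $\alpha$ must exceed every entry of $\beta$; otherwise some $a\in\alpha$, $b\in\beta$ with $a<b$ would make $a\,n\,b$ a $132$ pattern. Hence $\alpha$ is a $132$-avoiding permutation of $\{n-k+1,\dots,n-1\}$ and $\beta$ is a $132$-avoiding permutation of $\{1,\dots,n-k\}$. Conversely, any such pair gives a $132$-avoider: a $132$ pattern cannot use $n$ as its first or last entry, and it cannot straddle $\alpha$ and $\beta$ with the block structure. This yields $a_n=\sum_{k=1}^n a_{k-1}a_{n-k}$ with $a_0=1$, which is the Catalan recurrence. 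So $a_n=C_n$; the closed form follows from the standard generating function $C(x)=1+xC(x)^2$.

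\textbf{The pattern $123$.} This is the main obstacle, since no equally clean decomposition at the maximum is available. I would use the Simion--Schmidt bijection $S_n(123)\to S_n(132)$, which keeps the left-to-right minima (values and positions) fixed.
\begin{enumerate}
\item In a $123$-avoider, the entries that are not left-to-right minima must form a decreasing sequence. Otherwise two of them in increasing order, together with an earlier smaller left-to-right minimum, would form a $123$ pattern.
\item Given the positions and values of the left-to-right minima, there is exactly one way to fill the remaining positions so that the result is $132$-avoiding. Scan left to right and place at each free slot the smallest unused value that exceeds the most recent left-to-right minimum.
\item Similarly, there is exactly one way to fill them so that the result is $123$-avoiding: place the unused values in decreasing order. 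One must check that this preserves the left-to-right minima.
\item Both fillings exist precisely for the same admissible minima data, so the map is a bijection.
\end{enumerate}
The step needing the most care is verifying both uniqueness and existence of these fillings.

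Alternatively, one could give a direct first-return decomposition of $123$-avoiders, for example via Dyck paths through the Krattenthaler bijection.
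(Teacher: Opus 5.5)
Your proof is correct, but it cannot be compared with a proof in the paper, because the paper gives none. It quotes this statement as a classical result (Knuth, with a pointer to B\'{o}na's textbook). It then uses it only as a black box, for example to count the $213$-avoiding middle blocks in Theorem~\ref{Theorem:NonMonotonePatterns}.

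What you supply instead is the standard self-contained argument:
\begin{itemize}
\item Your symmetry reduction is exactly the case $k=1$ of Theorem~\ref{Theorem:ComplementReverse}.
\item The decomposition $p=\alpha\,n\,\beta$, with every entry of $\alpha$ above every entry of $\beta$, gives the Catalan recurrence for $132$. This block decomposition is in the same spirit as the paper's own arguments; compare $p=\ell\alpha\beta$ in Propositions~\ref{Prop:123;132} and~\ref{Prop:213;132}.
\item The Simion--Schmidt bijection fixing left-to-right minima transfers the count from $132$ to $123$.
\end{itemize}
The citation buys brevity. Your route buys independence from the literature, at the cost of the verification you flagged.

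That verification closes quickly.
\begin{itemize}
\item \textbf{Existence.} The only constraint on a non-minimum slot is that its value exceed the current left-to-right minimum. These thresholds weakly decrease from left to right, so the sets of allowed values are nested. Hence a valid filling exists if and only if, for each $j$, at least $j$ free values exceed the threshold at the $j$th free slot. Under exactly this condition both fillings succeed: the decreasing filling puts the $j$th largest free value in the $j$th free slot, and the greedy filling always finds an unused allowed value. So the admissible minima data are the same for both classes.
\item \textbf{Decreasing filling avoids $123$.} In a copy $xyz$, the entry $y$ is not a minimum, since $x<y$ precedes it. The non-minima decrease, so $z$ must be a minimum, which contradicts $x<z$.
\item \textbf{Greedy filling avoids $132$.} In a copy $xyz$ with $x<z<y$, the entry $z$ is not a minimum, since $x<z$ precedes it. So $z$ was still unused when $y$ was placed. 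The threshold at $y$'s slot is at most $x<z$, so the greedy rule would have chosen a value at most $z<y$, a contradiction.
\end{itemize}
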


Given $m\in\mathbb{N}$ and $q_1,q_2\in S_m$, we say that $S_n(q_1)$ and $S_n(q_2)$ 
are \textit{Wilf-equivalent} if $|S_n(q_1)|=|S_n(q_2)|$ for all $n\in\mathbb{N}$ . In the study of permutation patterns, it is 
standard to call $q_1$ and $q_2$ Wilf-equivalent if $|S_n(q_1)|=|S_n(q_2)|$ for all $n\in\mathbb{N}$ (see \cite[Chapter 1]{Kitaev2011}). However, since we are going to look at pattern avoidance under a variety of restrictions, we include the target set of permutations when we discuss Wilf-equivalence classes. By Theorem~\ref{Theorem:ClassicalSingle3}, there is exactly one Wilf-equivalence class for $S_n(q)$ with $q\in S_3$.

For any permutation $p\in S_n$, let $p^{(1)}=p$ be the \textit{first (positive) rotation} of $p$ and, for all $k\in\{2,3,\ldots,n\}$, 
let \[p^{(k)}=p(k)p(k+1)\ldots p(n)p(1)p(2)\ldots p(k-1)\] be the $k$th \textit{(positive) 
rotation} of $p$. Define  
$[p]:=\{p^{(1)},p^{(2)},\ldots,p^{(n)}\}
$
as the corresponding \textit{circular permutation}. Hence, a circular permutation is the set of all rotations of a given permutation. In~\cite{GLW2018}, permutation rotations are called circular shifts. Circular permutations have also been given other names, such as cyclic permutations \cite{Li2022} and cyclic arrangements \cite{Vella2003}; however, since the name cyclic permutations is also used for permutations which have only one cycle in the cycle notation~\cite{AE2014, BC2019,Huang2019}, we will call $[p]$ with $p\in S_n$ circular permutations in this paper. Note that, we use $p$ and $p^{(1)}$ interchangeably in this paper.

\begin{example}
    Let $p=2756431\in S_7$. Then we have $p^{(2)}=7564312$, $p^{(3)}=5643127$, $p^{(4)}=6431275$, $p^{(5)}=4312756$, $p^{(6)}=3127564$, $p^{(7)}=1275643$, and \[[2756431]=\{2756431, 7564312, 5643127, 6431275, 4312756, 3127564, 1275643\}.\]
\end{example}

We say that a circular permutation $[p]$ contains $q$ as a pattern if there 
exists $p'\in[p]$ such that $p'$ contains $q$ as a pattern. Let $\text{Av}_n[q]$ 
be the set of circular permutations of length $n$ that do not contain $q$ as a pattern. As noted by several papers \cite{Callan2002,DELMSSS2022}, due to the definition of circular permutations, we have $
\text{Av}_n[123]=\{[(n,n-1,\ldots,1)]\}$ and $
\text{Av}_n[321]=\{[(1,2,\ldots,n)]\}$. Interestingly, Callan \cite{Callan2002} and Vella \cite{Vella2003} proved the 
following formulas:
\[
|\text{Av}_n[1234]|=|\text{Av}_n[1432]|=2^n+1-2n-\binom{n}{3},
\]
\[
|\text{Av}_n[1243]|=|\text{Av}_n[1342]|=2^{n-1}-n+1,
\]
and
\[
|\text{Av}_n[1324]|=|\text{Av}_n[1423]|=F_{2n-3},
\]
where $F_i$ is the $i$th Fibonacci number with $F_1=F_2=1$ and $F_{i}=F_{i-1}+F_{i-2}$ for all $i\geq3$.

Other recent works on pattern avoidance study circular permutations avoiding pairs of patterns of length four \cite{DELMSSS2022}, avoiding vincular patterns \cite{ES2021,Li2022,MS2021}, avoiding a pattern of length four and a pattern of length $k$ \cite{MS2022}, as well as permutations whose cycle notation has a single cycle \cite{ABBGJ2023,ABBGJ2025}. There are also results on pattern containment in circular permutations \cite{DELMSSS2022,GLW2018} and stack-sorting maps which avoid all rotations of a given pattern \cite{ZB2026}. The goal of the current paper is to investigate this rotation process more in-depth and enumerate pattern-avoiding permutations when only $k$, $2\leq k\leq n$, rotations are considered. 

Given $m,n,k\in\mathbb{N}$ and $q_1,q_2,\ldots,q_k\in S_m$, let $S_{n}^{(k)}(q_1:q_2:\cdots:q_k)$ be the set of permutations $p\in S_n$ such that $p^{(i)}$ avoids $q_i$ for all $i\in\{1,2,\ldots,k\}$. When $q_1=q_2=\cdots=q_k=q$ for some $q$, we simply write $S_n^{(k)}(q)=S_n^{(k)}(q_1:q_2:\cdots:q_k)$. Notice that, for any pattern $q$, we have $|S_n^{(n)}(q)|=n|\text{Av}_n[q]|$ and 
\[
S_n^{(n)}(q)\subseteq S_n^{(n-1)}(q)\subseteq\cdots\subseteq S_n^{(2)}(q)\subseteq S_n^{(1)}(q)=S_n(q).
\]

\begin{example}
    Let $p=654231\in S_6$. Then $p^{(1)}=p=654231$ and $p^{(2)}=542316$ both avoid the pattern $132$. Hence $p\in S_6^{(2)}(132)$. 
However, in $p^{(3)}=423165$, $465$ is a $132$ pattern, therefore $p\notin S_6^{(3)}(132)$.
\end{example}

By Theorem~\ref{Theorem:ClassicalSingle3}, there is one Wilf-equivalence class for $S_n^{(1)}(q)$ with $q\in S_3$. 
Since $\text{Av}_n[123]=\{[(n,n-1,\ldots,1)]\}$ and $
\text{Av}_n[321]=\{[(1,2,\ldots,n)]\}$, there is also one Wilf-equivalence class for $S_n^{(n)}(q)$ with $q\in S_3$. In this paper, however, we show that there are two Wilf-equivalence classes for $S_n^{(k)}(q)$ with $q\in S_3$ when $k\geq2$. 
Enumerating permutations by fixing leading terms as in our earlier work \cite{EGY2026}, 
we find the exact formulas for $|S_n^{(k)}(q)|$ for $q\in S_3$ and $2\leq k\leq n$. In particular,
\[
|S_n^{(k)}(123)|=|S_n^{(k)}(321)|=2^{n-k+1}+k-2,
\]
and
\[
|S_n^{(k)}(213)|=|S_n^{(k)}(231)|=|S_n^{(k)}(312)|=|S_n^{(k)}(132)|=k-2+\sum_{i=0}^{n-k+1}C_i,
\]
where $C_i$ is the $i$th Catalan number. The two Wilf-equivalence classes are entirely determined by complements 
and reverses. These interpretations will be covered in Section~\ref{Section:Pre}. As is usual with pattern avoidance related work, in most instances our proofs involve case-by-case analysis.

Wilf-equivalence classes may also be determined by complements and reverses for permutations avoiding {\it rotational 3-chains}. Given $n\in\mathbb{N}$ and $q\in S_3$, we use $S_n^{(3)}(q:q^{(2)}:q^{(3)})$ to denote the set of permutations $p\in S_n$ such that $p$ avoids $q$, $p^{(2)}$ avoids $q^{(2)}$, and $p^{(3)}$ avoids $q^{(3)}$. Since $q^{(2)}$ and $q^{(3)}$ are rotations of $q$, we call $(q:q^{(2)}:q^{(3)})$ a rotational 3-chain. We show that, for all $n\geq3$, we have
\[
\begin{split}
    |S_n^{(3)}(123:231:312)|=|S_n^{(3)}(321:213:132)|=&|S_n^{(3)}(231:312:123)|\\=&|S_n^{(3)}(213:132:321)|=5n-10,
\end{split}
\]
and
\[
    |S_n^{(3)}(312:123:231)|=|S_n^{(3)}(132:321:213)|=\half (n+7)(n-2).
\]

We further study permutations $p$ such that $p^{(1)}$ and $p^{(2)}$ avoid two different patterns $q_1$ and $q_2$, of length three, respectively. We call $(q_1:q_2)$ a \textit{2-chain}. 
We show that for many 2-chains $(q_1:q_2)$ of length three and $n\geq2$, we have $|S_n^{(2)}(q_1:q_2)|=|S_n(q_1,q_2)|$, 
even though $S_n^{(2)}(q_1:q_2)\neq S_n(q_1,q_2)$, where $S_n(q_1,q_2)$ is the set of permutations $p\in S_n$ such that $p$ avoids both $q_1$ and $q_2$ simultaneously. If $|S_n^{(2)}(q_1:q_2)|\neq |S_n(q_1,q_2)|$ with $q_1\neq q_2$ and not both of them are monotone, then there are three possible expressions for $|S_n^{(2)}(q_1:q_2)|$ with $n\geq2$. 
These are 
\[
2n-2, ~~(n+2)\cdot2^{n-3}, ~~\mbox{and} ~~ n + 2 \binom{n}{3}. 
\]
The last two expressions are especially interesting because they enumerate two known 
integer sequences A045623 and A116731 respectively, in OEIS \cite{OEIS}, and hence offer new interpretations 
for these sequences in terms of permutation rotations.

We would like to point out that our interest in studying permutations such that the first few rotations avoid certain patterns was partially inspired by several recent 
studies on pattern avoidance making use of the group structure of the symmetric group. B\'{o}na and Smith \cite{BonaSmith2019} studied permutations $p$ such that both $p$ and $p^2$ avoid a pattern of length three. Burcroff and Defant \cite{BurcroffDefant2020} went one step further and studied permutations whose powers all avoid a given pattern. Archer and Geary \cite{ArcherGeary2023} studied a more generalized scenario in which permutation powers avoid a chain of patterns.

This paper is organized as follows. In Section~\ref{Section:Pre}, we introduce negative rotations of a permutation and prove a generalized result on complements and reverses of permutations. In Section~\ref{Section:SinglePattern3}, we enumerate permutations whose first $k$ rotations all avoid a given pattern of length three. In Section~\ref{Section:Rot3Chain}, we enumerate permutations whose first three rotations avoid a rotational 3-chain. Permutations avoiding $2$-chains are studied in Section~\ref{Section:2Chain}. In Section~\ref{Section:Conclude} we show that, for certain cases, the Wilf-equivalence classes for permutation rotations when the patterns are of length four are also entirely determined by complements and reverses.

\section{Preliminaries}\label{Section:Pre}
We first define negative rotations and show how they relate to positive rotations.

\begin{definition}
    For all $p\in S_n$, let $p^{(1)'}=p$ be the \textit{first negative rotation} of $p$, and for $k\in\{2,3,\ldots,n\}$, let
\[p^{(k)'}=p(n-k+2)p(n-k+3)\ldots p(n) p(1) p(2)\ldots p(n-k+1)\] be the \textit{$k$th negative rotation} of $p$.
\end{definition}

Given $k,m,n\in\mathbb{N}$ with $k\leq n$ and $q_1,q_2,\ldots,q_k\in S_m$, let $S_n^{(k)'}(q_1:q_2:\cdots;q_k)$ be the set of permutations $p\in S_n$ such that $p^{(i)'}$ avoids $q_i$ for all $i\in\{1,2,\ldots,k\}$.
\begin{lemma}\label{Lemma:ClockCounterclock}
Let $k,m,n\in\mathbb{N}$ with $k\leq n$ and $q_1,q_2,\ldots,q_k\in S_m$. Then 
\[
|S_n^{(k)}(q_1:q_2:\cdots:q_k)|=|S_n^{(k)'}(q_k:q_{k-1}: \cdots:q_1)|.
\]
\end{lemma}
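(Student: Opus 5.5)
Our plan is to construct an explicit bijection $\phi:S_n\to S_n$ such that, for every $p\in S_n$ and every $i\in\{1,\ldots,k\}$, the permutation $\phi(p)^{(k+1-i)'}$ equals $p^{(i)}$. Then $p^{(i)}$ avoids $q_i$ for all $i$ exactly when $\phi(p)^{(j)'}$ avoids $q_{k+1-j}$ for all $j\in\{1,\ldots,k\}$. Hence $\phi$ restricts to a bijection from $S_n^{(k)}(q_1:q_2:\cdots:q_k)$ onto $S_n^{(k)'}(q_k:q_{k-1}:\cdots:q_1)$, which gives the equality of cardinalities.

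The natural choice is $\phi(p)=p^{(k)}$, the $k$th positive rotation. This is a bijection of $S_n$, since rotation by a fixed amount is invertible with inverse the corresponding negative rotation. The key step is the index bookkeeping. Write rotations as cyclic shifts: $p^{(i)}$ is $p$ shifted left by $i-1$ positions, and $r^{(j)'}$ is $r$ shifted right by $j-1$ positions, that is, left by $n-j+1$ positions modulo $n$.

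We check this against the definition $r^{(j)'}=r(n-j+2)\cdots r(n)r(1)\cdots r(n-j+1)$. The sequence starts at index $n-j+2$, which is a left shift by $n-j+1\equiv -(j-1)\pmod n$. So $(p^{(k)})^{(j)'}$ is $p$ shifted left by $(k-1)-(j-1)=k-j$ positions, which is $p^{(k-j+1)}$. Setting $j=k+1-i$ gives $p^{(i)}$, as required.

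The only real obstacle is getting this modular index arithmetic right, including the edge cases $j=1$ and $i=1$ (where $k-j+1$ stays in $\{1,\ldots,k\}\subseteq\{1,\ldots,n\}$ because $k\le n$). Everything else follows directly from the definitions, and no pattern-specific argument is needed.
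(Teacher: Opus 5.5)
Your proposal is correct and follows the paper's proof: both use the map $p\mapsto p^{(k)}$ together with the identity $(p^{(k)})^{(j)'}=p^{(k-j+1)}$ for $1\le j\le k$. Your version treats this map as a bijection of all of $S_n$ that preserves membership in both directions, which makes the surjectivity step a little more direct than the paper's separate check, but the argument is the same.
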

\begin{proof}
Let $f:S_n^{(k)}(q_1:q_2:\cdots:q_k)\to S_n^{(k)'}(q_k:q_{k-1}:\cdots:q_1)$ be defined by $f(p)=p^{(k)}$. We need to show that $p^{(k)}\in S_n^{(k)'}(q_k:q_{k-1}:\cdots:q_1)$ for all $p\in S_n^{(k)}(q_1:q_2:\cdots:q_k)$. Let $p\in S_n^{(k)}(q_1:q_2:\cdots:q_k)$. Then $p^{(i)}$ avoids $q_i$ for all $i\in\{1,2,\ldots,k\}$. By the definition of negative rotations, we have $(p^{(k)})^{(j)'}=p^{(k-j+1)}$ for all $j\in\{1,2,\ldots,k\}$. It follows that $(p^{(k)})^{(j)'}$ avoids $q_{k-j+1}$ for all $j\in\{1,2,\ldots,k\}$. Hence, $p^{(k)}\in S_n^{(k)'}(q_k:q_{k-1}:\cdots:q_1)$.

It remains to show that $f$ is a bijection. By the definition of permutation rotations, if $p_1^{(k)}\neq p_2^{(k)}$, then $p_1\neq p_2$. Hence, $f$ is one-to-one. Next take $\sigma\in S_n^{(k)'}(q_k:q_{k-1}:\cdots:q_1)$. 
Using a similar argument as in the previous paragraph, we have $\sigma^{(k)'}\in S_n^{(k)}(q_1:q_2:\cdots:q_k)$. Since $f(\sigma^{(k)'})=\sigma$, $f$ is also onto.
\end{proof}
Now we define the complement and reverse of a permutation. We will then use these to reduce the number of cases we need for enumeration in later sections.

\begin{definition}\label{Definition:Complements}
Let $n\in\mathbb{N}$. For any $p\in S_n$, the {\em complement} $p^c$ of $p$ is a permutation in $S_n$ defined by setting 
$p^c(i)=n+1-p(i)$ for all $i\in\{1,2,\ldots,n\}$. The {\em reverse} $p^r$ of $p$ is a permutation in $S_n$ defined by setting $p^r(i)=p(n+1-i)$ for all $i\in\{1,2,\ldots,n\}$. 
\end{definition}

It is a standard observation that $p$ avoids $q$ if and only if $p^c$ avoids $q^c$; and $p$ avoids $q$ if and only if $p^r$ avoids $q^r$.

\begin{lemma}\label{Lemma:DirectionInverse}
Let $n\in\mathbb{N}$. For all $p\in S_n$ and $k\leq n$, we have $(p^{(k)})^r=(p^r)^{(k)'}$.
\end{lemma}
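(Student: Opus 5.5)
The plan is a direct entrywise comparison: both sides are permutations in $S_n$, so it suffices to check that they agree in every position $i\in\{1,2,\ldots,n\}$. I will first rewrite each rotation as an index map modulo $n$, then compose it with the reverse.

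First I record closed forms. From the definition of the $k$th positive rotation, $p^{(k)}(i)=p(i+k-1)$ for $1\le i\le n-k+1$ and $p^{(k)}(i)=p(i+k-1-n)$ for $n-k+2\le i\le n$. More compactly, $p^{(k)}(i)=p(\overline{i+k-1})$, where $\overline{x}$ denotes the representative of $x$ modulo $n$ in $\{1,\ldots,n\}$. Likewise, the definition of the $k$th negative rotation gives $p^{(k)'}(i)=p(\overline{i-k+1})$. Concretely, for $1\le i\le k-1$ we get $p(n-k+1+i)$, and for $k\le i\le n$ we get $p(i-k+1)$. The case $k=1$ is trivial, since both rotations are the identity operation; the formulas also cover it.

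Next I compute the left side: $(p^{(k)})^r(i)=p^{(k)}(n+1-i)=p(\overline{n+1-i+k-1})=p(\overline{k-i})$. For the right side, using $p^r(j)=p(n+1-j)$, I get $(p^r)^{(k)'}(i)=p^r(\overline{i-k+1})=p(\overline{n+1-(i-k+1)})=p(\overline{k-i})$. Here I use that $n+1-\overline{x}\equiv n+1-x \pmod n$, and that $n+1-\overline{x}$ again lies in $\{1,\ldots,n\}$. The two sides therefore agree at every $i$, which proves the lemma.

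The only real obstacle is bookkeeping with the wrap-around, since the paper's definitions are piecewise. If the modular notation feels too informal, I will redo the check with explicit cases. For $1\le i\le k-1$, both sides equal $p(k-i)$. For $k\le i\le n$, both sides equal $p(n+k-i)$. In each case I verify that the indices fall into the correct piece of the piecewise definitions. As a sanity check, I can test a small example such as $p=2756431$ with $k=3$.
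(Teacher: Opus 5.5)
Your proof is correct and is essentially the paper's argument. The paper writes both sides out in one-line notation and observes that each equals $p(k-1)p(k-2)\cdots p(1)p(n)p(n-1)\cdots p(k)$, which is exactly your explicit case check ($p(k-i)$ for $i\le k-1$ and $p(n+k-i)$ for $k\le i\le n$); your modular-index formulation is just a coordinatized version of the same direct verification.
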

\begin{proof}
    Let $p=p(1)p(2)\cdots p(n)\in S_n$. Then
    \[
    p^{(k)}=p(k)p(k+1)\cdots p(n)p(1)p(2)\ldots p(k-1),
    \]
    and hence
    \[
    (p^{(k)})^r=p(k-1)p(k-2)\cdots p(1)p(n)p(n-1)\ldots p(k).
    \]
At the same time, we have $p^r=p(n)p(n-1)\cdots p(1)$ and hence
\[
(p^r)^{(k)'}=p(k-1)p(k-2)\ldots p(1)p(n)p(n-1)\ldots p(k).
\]
Hence $(p^{(k)})^r=(p^r)^{(k)'}$.
\end{proof}

\begin{theorem}\label{Theorem:ComplementReverse}
Let $k,m,n\in\mathbb{N}$ with $k\leq n$ and $q_1,q_2,\ldots,q_k\in S_m$. Then
    \[
    |S_n^{(k)}(q_1:q_2:\cdots:q_k)|=|S_n^{(k)}(q_1^c:q_2^c:\cdots:q_k^c)|=|S_n^{(k)}(q_k^r:q_{k-1}^r:\cdots:q_1^r)|.
    \]
\end{theorem}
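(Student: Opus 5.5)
The plan is to prove the two equalities separately, each by an explicit bijection.

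For the complement, I will use the map $p\mapsto p^c$. Complementing acts only on values and rotating acts only on positions, so $(p^c)^{(i)}=(p^{(i)})^c$ for every $i$. Combined with the standard fact that $p$ avoids $q$ if and only if $p^c$ avoids $q^c$, this gives the chain
\[
p^{(i)} \text{ avoids } q_i \iff (p^{(i)})^c \text{ avoids } q_i^c \iff (p^c)^{(i)} \text{ avoids } q_i^c .
\]
Since complementation is an involution on $S_n$, it restricts to a bijection from $S_n^{(k)}(q_1:\cdots:q_k)$ onto $S_n^{(k)}(q_1^c:\cdots:q_k^c)$.

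For the reverse, I will route the argument through negative rotations, using Lemmas~\ref{Lemma:DirectionInverse} and~\ref{Lemma:ClockCounterclock}.

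\textbf{Step 1.} I will show that $p\mapsto p^r$ is a bijection from $S_n^{(k)}(q_1:\cdots:q_k)$ onto $S_n^{(k)'}(q_1^r:\cdots:q_k^r)$. By Lemma~\ref{Lemma:DirectionInverse}, $(p^r)^{(i)'}=(p^{(i)})^r$. This permutation avoids $q_i^r$ exactly when $p^{(i)}$ avoids $q_i$. Reversal is an involution, so the map is a bijection.

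\textbf{Step 2.} I will apply Lemma~\ref{Lemma:ClockCounterclock} to the sequence $(q_k^r,q_{k-1}^r,\ldots,q_1^r)$. It gives
\[
|S_n^{(k)}(q_k^r:q_{k-1}^r:\cdots:q_1^r)|=|S_n^{(k)'}(q_1^r:q_2^r:\cdots:q_k^r)|.
\]

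\textbf{Step 3.} Chaining Step 1 with Step 2 yields the second equality of the theorem.

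The only real care needed is the index bookkeeping. Lemma~\ref{Lemma:ClockCounterclock} reverses the order of the patterns, and Lemma~\ref{Lemma:DirectionInverse} turns positive rotations into negative ones. I must check that these two reversals of order compose to exactly the stated order $q_k^r,\ldots,q_1^r$. With the substitution in Step 2 they do. I expect no substantive obstacle beyond this bookkeeping.
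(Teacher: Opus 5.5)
Your proof is correct and is essentially the paper's argument. The complement part is identical, and for the reverse the paper also combines Lemma~\ref{Lemma:ClockCounterclock} with the reversal bijection justified by Lemma~\ref{Lemma:DirectionInverse}; the only cosmetic difference is that the paper applies Lemma~\ref{Lemma:ClockCounterclock} to the original chain and then reverses between $S_n^{(k)}(q_k^r:\cdots:q_1^r)$ and $S_n^{(k)'}(q_k:\cdots:q_1)$, whereas you reverse first and apply the lemma to the reversed chain, and your index bookkeeping checks out.
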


\begin{proof}
We first prove that $|S_n^{(k)}(q_1:q_2:\cdots:q_k)|=|S_n^{(k)}(q_1^c:q_2^c:\cdots:q_k^c)|$. Since rotations only change the location of each term of a permutation, we have $\left(p^{(k)}\right)^c=\left(p^c\right)^{(k)}$ for all $p\in S_n$. It follows that $p\in S_n^{(k)}(q_1:q_2:\cdots:q_k)$ if and only if $p^c\in S_n^{(k)}(q_1^c:q_2^c:\cdots:q_k^c)$. Hence $|S_n^{(k)}(q_1:q_2:\cdots:q_k)|=|S_n^{(k)}(q_1^c:q_2^c:\cdots:q_k^c)|$.

Now we prove that $|S_n^{(k)}(q_1:q_2:\cdots:q_k)|=|S_n^{(k)}(q_k^r:q_{k-1}^r:\cdots:q_1^r)|$. By Lemma~\ref{Lemma:ClockCounterclock}, it suffices to prove that $|S_n^{(k)'}(q_k:q_{k-1}:\cdots:q_1)|=|S_n^{(k)}(q_k^r:q_{k-1}^r:\cdots:q_1^r)|$. Let \[g:S_n^{(k)}(q_k^r:q_{k-1}^r:\cdots:q_1^r)\to S_n^{(k)'}(q_k:q_{k-1}:\cdots:q_1)\] 
be defined by 
$g(p)=p^r$. Suppose $p\in S_n^{(k)}(q_k^r:q_{k-1}^r:\cdots:q_1^r)$. Then $p^{(i)}$ avoids $q_{k-i+1}^r$ for all $i\in\{1,2,\ldots,k\}$. If follows that $(p^{(i)})^r$ avoids $q_{k-i+1}$ for all $i\in\{1,2,\ldots,k\}$. By Lemma~\ref{Lemma:DirectionInverse}, we have $(p^r)^{(i)'}=(p^{(i)})^r$ for all $i\in\{1,2,\ldots,k\}$. This implies that $(p^r)^{(i)'}$ avoids $q_{k-i+1}$ for all $i\in\{1,2,\ldots,k\}$, nd 
hence  $p^r\in S_n^{(k)'}(q_k:q_{k-1}:\cdots:q_1)$. $g$ is one-to-one since  $p_1^r=p_2^r$ if and only if $p_1=p_2$. We also have that $g$ is onto because $g(p^r)=p$ for all $p\in S_n^{(k)'}(q_k:q_{k-1}:\cdots:q_1)$. 
Therefore $g$ is a bijection and $|S_n^{(k)'}(q_k:q_{k-1}:\cdots:q_1)|=|S_n^{(k)}(q_k^r:q_{k-1}^r:\cdots:q_1^r)|$.
\end{proof}

When $q_1=q_2=\cdots=q_k=q$ for some $q\in S_m$, Theorem~\ref{Theorem:ComplementReverse} says that $|S_n^{(k)}(q)|=|S_n^{(k)}(q^c)|=|S_n^{(k)}(q^r)|$. This observation has been widely used in pattern avoidance studies to classify Wilf-equivalence classes (see, for example, \cite[p.~151]{Bona2022}). Theorem~\ref{Theorem:ComplementReverse} 
makes this argument stronger in terms of reverses because when $q_1,q_2,\ldots,q_k$ are different, we need reverses of both individual patterns and the order of the patterns.

Finally, we define subpermutations of a permutation that will be used in our proofs.

\begin{definition}
For any finite set $X\subseteq\mathbb{N}$, let $S_X$ be the set of permutations of $X$. When $X=\{1,2,\ldots,n\}$, we write $S_n=S_{\{1,2,\ldots,n\}}$. Let $A$ and $B$ be two finite subsets of $\mathbb{N}$ with $A\subseteq B$, $\sigma\in S_A$, and $\tau\in S_B$. We say that $\sigma$ is a 
{\em subpermutation} of $\tau$ on $A$ if 
there exist indices $1 \leq i_1<i_2<\cdots<i_{|A|} \leq |B|$ such that
    \[
    (\tau(i_1),\tau(i_2),\ldots,\tau(i_{|A|}))=(\sigma(1),\sigma(2),\ldots,\sigma(|A|)).
    \]
\end{definition}
For example, if  $\tau=543621\in S_6$, 
then $\sigma=462\in S_{\{2,4,6\}}$ is a subpermutation of $\tau$ on $\{2,4,6\}$.

\section{$k$ Rotations Avoiding a Common Pattern}\label{Section:SinglePattern3}
By Theorem~\ref{Theorem:ClassicalSingle3}, for all $n\geq1$ and $q\in S_3$, we have $|S_n^{(1)}(q)|=|S_n(q)|=C_n$. In this section we enumerate $S_n^{(k)}(q)$ for $k\in\{2,3,\ldots,n\}$ and $q\in S_3$. We start with monotone patterns.

\begin{theorem}\label{Theorem:MonotonePatterns}
    For all $k$ with $2\leq k\leq n$, we have 
    \[
    |S_n^{(k)}(123)|=|S_n^{(k)}(321)|=2^{n-k+1}+k-2.
    \]
\end{theorem}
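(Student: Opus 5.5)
By Theorem~\ref{Theorem:ComplementReverse}, $|S_n^{(k)}(123)|=|S_n^{(k)}(123^c)|=|S_n^{(k)}(321)|$, so I will work only with $321$. The plan is to describe $S_n^{(k)}(321)$ explicitly through a local criterion, and then count.

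\emph{Local criterion.} Let $a=p(1)$. In $p$ the entry $a$ sits in front. So $p$ can contain a $321$ starting at $a$ only if the entries smaller than $a$ fail to appear in increasing order.

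In $p^{(2)}$ the entry $a$ sits at the end. So $p^{(2)}$ can contain a $321$ ending at $a$ only if the entries larger than $a$ fail to appear in increasing order.

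A merge of two increasing sequences avoids $321$. Hence I expect:
\[
p\in S_n^{(2)}(321)\iff p(2)\cdots p(n)\text{ is a shuffle of the increasing word on }\{1,\ldots,a-1\}\text{ and the increasing word on }\{a+1,\ldots,n\}.
\]
Such a $p$ is determined by $a$ and by which of the $n-1$ positions carry small values. This gives $\sum_a\binom{n-1}{a-1}=2^{n-1}$, which is the case $k=2$ of the theorem.

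\emph{Iterating the criterion.} In general, $p\in S_n^{(k)}(321)$ if and only if $p^{(j)}$ and $p^{(j+1)}$ satisfy the two-increasing-words criterion for every $j\le k-1$. I will therefore track what happens when a leading entry is moved to the back.

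Write $p(2)\cdots p(n)$ as a shuffle of an increasing word $L$ (values below $a$) and an increasing word $H$ (values above $a$), and set $b=p(2)$.

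If $b\in H$, then $b=\min H$. Appending $a$ to the end preserves the criterion for $p^{(2)}$ with pivot $b$: the values below $b$ are $L$ followed by $a$, which is increasing, and the values above $b$ are the rest of $H$.

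If $b\in L$, the values above $b$ in $p(3)\cdots p(n)\,a$ must be increasing. Since $a$ comes last, this forces $H=\emptyset$, so $a=n$, and it forces the rest of $p$ to be increasing. That makes $p$ a rotation of $12\cdots n$.

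Repeating this analysis, $p\in S_n^{(k)}(321)$ should split into two kinds.

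\emph{Generic permutations.} Here $p(1)<p(2)<\cdots<p(k-1)$, each $p(j+1)$ is the least value above $p(j)$ that has not yet been placed, and the remaining tail is a free two-word shuffle. I expect these to number $2^{n-k+1}$.

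\emph{Exceptional permutations.} These are rotations $i(i+1)\cdots n12\cdots(i-1)$ of the identity whose descent falls among the first $k-1$ positions. All rotations of such a permutation avoid $321$. I expect exactly $k-2$ of them not to be already counted among the generic permutations.

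I would organize this as an induction on $k$ with the recurrence
\[
|S_n^{(k)}(321)|=|S_n^{(k-1)}(321)|-\bigl(2^{n-k+1}-1\bigr),
\]
which telescopes from $2^{n-1}$ to $2^{n-k+1}+k-2$. To prove it I will use the map $p\mapsto p^{(2)}$, in the spirit of Lemma~\ref{Lemma:ClockCounterclock}. The task is then to count the elements of $S_n^{(k-1)}(321)$ that lose membership when one further rotation is imposed. By the case analysis above, these are the generic permutations whose $(k-1)$st leading entry is followed by a small value, apart from the one rotation of the identity. That should give $2^{n-k+1}-1$.

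The main obstacle is this bookkeeping. I need to check that the two-words structure really propagates one rotation at a time. I also need to check that the rotations of the identity are neither double-counted nor missed at the boundary cases, where $H$ or $L$ runs out, or where $k$ is close to $n$. As a sanity check, the case $k=n$ must give $n=n\,|\mathrm{Av}_n[321]|$.
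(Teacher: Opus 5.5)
Your proposal is correct and reaches the same classification as the paper, but by a different mechanism.

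The paper works with $123$ and characterizes $S_n^{(2)}(123)$ exactly as you do, up to complement. For $k\ge 3$ it then splits on $\ell=p(1)$. It forces the first $k-1$ entries by exhibiting explicit patterns such as $(\ell-j,\ell,n)$ in $p^{(j+2)}$. It then verifies membership separately, by cutting each $p^{(i)}$ into two or three monotone blocks. After complementation, its Cases 2 and 3 are your generic permutations (with $a=1$ and $a\ge 2$ respectively). Its Case 1 is exactly your set of $k-2$ exceptional rotations with $i\ge n-k+3$.

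Your route has two advantages. The reduction to $p^{(j)}\in S_n^{(2)}(321)$ for $1\le j\le k-1$, combined with your one-step propagation lemma, gives necessity and sufficiency at the same time, so you need no separate verification step. It also explains why exactly $k-2$ extra permutations appear: the run $a,a+1,\ldots$ reaches $n$ before position $k-1$, and after that only a rotation of the identity survives.

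The bookkeeping you worry about is routine.
\begin{itemize}
\item While $p(1),\ldots,p(j)=a,\ldots,a+j-1$, the values of $p^{(j)}$ below its pivot are $L$ followed by $a,\ldots,a+j-2$ sitting at the very end.
\item Since $j<n$, this means $p(j+1)$ is either $p(j)+1$ or $1$.
\item For $j\le k-2$, the choice $p(j+1)=1$ together with $p^{(j+1)}\in S_n^{(2)}(321)$ forces $p^{(j)}=(n,1,2,\ldots,n-1)$.
\end{itemize}

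Your telescoping recurrence also checks out. Among the generic elements of $S_n^{(k-1)}(321)$, exactly $2^{n-k+1}$ have $p(k-1)=1$, and only the one with $a=n-k+3$ survives. However, the recurrence is an unnecessary detour. Once the classification is established, counting $2^{n-k+1}$ generic plus $k-2$ exceptional permutations directly, as the paper does, is simpler, and no appeal to the map $p\mapsto p^{(2)}$ is needed.
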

\begin{proof}
Let $n\geq2$. Since $(123)^r=321$, by Theorem~\ref{Theorem:ComplementReverse}, it suffices to prove the result for $S_n^{(k)}(123)$.

We first prove this for $k=2$. Let $\mathcal{A}\subseteq S_n$ be such that for all $p\in\mathcal{A}$ with $p(1)=\ell$, the subpermutation of $p$ on $\{\ell+1,\ell+2,\ldots,n\}$ is $(n,n-1,\ldots,\ell+1)$ and the subpermutation of $p$ on $\{\ell,\ell-1,\ldots,1\}$ is $(\ell,\ell-1,\ldots,1)$. 
According to this 
definition, for each fixed $\ell$, there are $\binom{n-1}{\ell-1}$ permutations $p\in\mathcal{A}$ with $p(1)=\ell$. Adding the permutations in $\mathcal{A}$ by their leading terms, we have 
\[
|\mathcal{A}|=\sum_{\ell=1}^n\binom{n-1}{\ell-1}=2^{n-1}.
\]

We will show that $S_n^{(2)}(123)=\mathcal{A}$. We first prove that $\mathcal{A}\subseteq S_n^{(2)}(123)$. Let $p\in\mathcal{A}$ with $p(1)=\ell$. 
Suppose, by way of contradiction, that $abc$ is a $123$ pattern in $p$. By the definition of $\mathcal{A}$, we cannot have $a>\ell$ or $c\leq\ell$ because otherwise $abc$ would be either a subpermutation of $(n,n-1,\ldots,\ell+1)$ or $(\ell,\ell-1,\ldots,1)$, 
a contradiction. It follows that $a\leq\ell$ and $c>\ell$. Now if $b\leq \ell$, then $ab$ is a subpermutation of $(\ell,\ell-1,\ldots,1)$, which is a contradiction; and if $b>\ell$, then $bc$ is a subpermutation of $(n,n-1,\ldots,\ell+1)$, which is again a contradiction. This proves that $p\in S_n^{(1)}(123)$. 
Notice this automatically implies that $p(2)p(3)\cdots p(n)$ avoids the pattern $123$. Next we show that $p^{(2)}$ avoids $123$. 
Suppose, by way of contradiction, that a subpermutation $abc$ of $p^{(2)}$ is a $123$ pattern. 
Since $p^{(2)}=p(2)p(3)\cdots p(n)\ell$ and $p(2)p(3)\cdots p(n)$ avoids $123$, we must have $c=\ell$. It follows that $a<b<\ell$ and hence $ab$ is a subpermutation of $(\ell-1,\ell-2,\ldots,1)$, which is a contradiction. This proves that $p \in S_n^{(2)}(123)$. Hence, we have $\mathcal{A}\subseteq S_n^{(2)}(123)$.

It remains to show that $S_n^{(2)}(123)\subseteq\mathcal{A}$. Let $p\in S_n^{(2)}(123)$. Suppose $p(1)=\ell$. Then the subpermutation of $p$ on $\{\ell+1,\ell+2,\ldots,n\}$ must be $(n,n-1,\ldots,\ell+1)$ because otherwise $p$ contains a $123$ pattern. At the same time, the subpermutation of $p$ on $\{\ell,\ell-1,\ldots,1\}$ is $(\ell,\ell-1,\ldots,1)$ because otherwise the subpermutation of $p^{(2)}$ on $\{\ell,\ell-1,\ldots,1\}$ would contain a 123 pattern. Hence, we have $S_n^{(2)}(123)\subseteq\mathcal{A}$. This completes the proof for $k=2$.

Now let $k\geq3$. Notice that $S_n^{(k)}(123)\subseteq S_n^{(2)}(123)=\mathcal{A}$. In particular, for all $p\in S_n^{(k)}(123)$ with $p(1)=\ell$, the subpermutation of $p$ on $\{\ell+1,\ell+2,\ldots,n\}$ is $(n,n-1,\ldots,\ell+1)$ and the subpermutation of $p$ on $\{\ell,\ell-1,\ldots,1\}$ is $(\ell,\ell-1,\ldots,1)$. Let $p\in S_n^{(k)}(123)$ with $p(1)=\ell$.

\textbf{Case 1}: $1\leq \ell\leq k-2$.
In this case, we show that $p=(\ell,\ell-1,\ldots,1,n,n-1,\ldots,\ell+1)$. 
Since $S_n^{(k)}(123)\subseteq\mathcal{A}$, this holds when $\ell=1$. So we assume that $\ell\geq2$. Suppose, by way of contradiction, that $p\neq (\ell,\ell-1,\ell-2,\ldots,1,n,n-1,\ldots,\ell+1)$. Since $S_n^{(k)}(123)\subseteq\mathcal{A}$, there exists $i\leq \ell-1$ such that $n$ comes before $\ell-i$. Let $j\leq \ell-1$ be the smallest such that $n$ comes before $\ell-j$. Then $(\ell-j,\ell,n)$ is a subpermutation of $p^{(j+2)}$ 
and $(\ell-j,\ell,n)$ is a $123$ pattern, a contradiction. So for each $\ell\in\{1,2,\ldots,k-2\}$, there is a unique $p\in S_n^{(k)}(123)$ with $p(1)=\ell$.

\textbf{Case 2}: $\ell=n$. In this case, we have a unique $p=(n,n-1,\ldots,1)$.

\textbf{Case 3}: $\ell\in\{k-1,k,\ldots,n-1\}$. 
In this case, we have $p(1)p(2)\cdots p(k-1)=(\ell,\ell-1,\ldots,\ell-k+2)$. 
This is because otherwise 
$p^{(j+2)}$ would have a $123$ pattern for some $j\leq k-2$, similar to Case 1.
Since the subpermutation of $p$ on $\{1,2,\ldots,\ell-k+1\}$ is $(\ell-k+1,\ell-k,\ldots,1)$ and the subpermutation of $p$ on $\{\ell+1,\ell+2,\ldots,n\}$ is $(n,n-1,\ldots,\ell+1)$, there are $\binom{n-\ell+\ell-k+1}{\ell-k+1}=\binom{n-k+1}{\ell-k+1}$ possibilities for $p$. 
We still need to check that for all of these possibilities, $p,p^{(2)},\ldots,p^{(k)}$ all avoid $123$. Let $p$ be such a permutation and let $\alpha$ be the subpermutation of $p$ on $\{1,2,\ldots,\ell-k+1\}\cup\{\ell+1,\ell+2,\ldots,n\}$. When $i=1$, 
$p^{(1)}$ is the concatenation of two blocks: the first block is $(\ell,\ell-1,\ldots,\ell-k+2)$ and the second block is $\alpha$. 
When $i=k$, 
$p^{(k)}$ is the concatenation of two blocks: the first block is $\alpha$ and the second block is $(\ell,\ell-1,\ldots,\ell-k+2)$. When $i\in\{2,3,\ldots,k-1\}$, we have that $p^{(i)}$ is a concatenation of three blocks: the first block is $(\ell-i+1,\ell-i,\ldots,\ell-k+2)$, the middle block is $\alpha$, and the third block is $(\ell,\ell-1,\ldots,\ell-i+2)$. 
Now we show that $p^{(i)}$ avoids $123$ for $i\in\{2,3,\ldots,k-1\}$. The cases for $i=1$ and $i=k$ are then similar. Suppose $abc$ is a $123$ pattern of $p^{(i)}$ for some $i\in\{2,3,\ldots,k-1\}$. Notice that the subpermutation of $p^{(i)}$ on $\{1,2,\ldots,\ell-i+1\}$ is $(\ell-i+1,\ell-i,\ldots,1)$ and the subpermutation of $p^{(i)}$ on $\{\ell-i+2,\ell-i+3,\ldots,n\}$ is $(n,n-1,\ldots,\ell-i+2)$. If $a\leq \ell-i+1$, then $bc$ is a subpermutation of $(n,n-1,\ldots,\ell-i+2)$, which is a contradiction; and if $a>\ell-i+1$, then $abc$ is a subpermutation of $(n,n-1,\ldots,\ell-i+2)$, which is again a contradiction.

Adding all the contributions together, we have
\[
|S_n^{(k)}(123)|=(k-2)+1+\sum_{\ell=k-1}^{n-1}\binom{n-k+1}{\ell-k+1}=2^{n-k+1}+k-2
\]
for $k\geq3$.
\end{proof}
Next, we enumerate $S_n^{k}(q)$, where $q\in S_3$ is not monotone.
\begin{theorem}\label{Theorem:NonMonotonePatterns}
    For all $k$ with $2\leq k\leq n$, we have
    \[
|S_n^{(k)}(213)|=|S_n^{(k)}(231)|=|S_n^{(k)}(312)|=|S_n^{(k)}(132)|=k-2+\sum_{i=0}^{n-k+1}C_i.
    \]
\end{theorem}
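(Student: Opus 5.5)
By Theorem~\ref{Theorem:ComplementReverse} the four sets are equinumerous, since $(213)^c=231$, $(213)^r=312$ and $(231)^r=132$. So it suffices to treat $q=231$, which interacts nicely with fixing the leading term $\ell=p(1)$. I would mirror the proof of Theorem~\ref{Theorem:MonotonePatterns}: first settle $k=2$ completely, then for $k\ge 3$ split according to $\ell$.

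\textbf{The case $k=2$.} Let $p\in S_n(231)$ with $p(1)=\ell$. If some $c>\ell$ preceded some $a<\ell$, then $\ell\,c\,a$ would be a $231$ pattern. So $p=\ell\,\sigma\,\tau$, where $\sigma$ is a $231$-avoiding permutation of $\{1,\ldots,\ell-1\}$ and $\tau$ is a permutation of $\{\ell+1,\ldots,n\}$. Now $p^{(2)}=\sigma\,\tau\,\ell$. Any ascent $b<c$ inside $\tau$ gives the $231$ pattern $b\,c\,\ell$, so $\tau$ must be decreasing. Conversely, suppose $\sigma$ avoids $231$ and $\tau$ is decreasing. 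Then both $p$ and $\sigma\tau\ell$ avoid $231$: a $231$ pattern would need an ascent among letters larger than its last letter, and a case check on which block each letter lies in rules this out. Hence
\[
|S_n^{(2)}(231)|=\sum_{\ell=1}^n C_{\ell-1}=\sum_{i=0}^{n-1}C_i,
\]
which is the claimed formula for $k=2$.

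\textbf{The case $k\ge3$.} Every $p\in S_n^{(k)}(231)$ has the form $\ell\,\sigma\,\tau$ above. The key step is an induction on $j$ showing that $p(j)=\ell-j+1$ for $1\le j\le\min(k-1,\ell)$. For $j=2$, write $s=\sigma(1)$. Then $p^{(3)}=\sigma(2)\cdots\,\tau\,\ell\,s$. Any $y$ in $\sigma$ with $s<y<\ell$ produces the $231$ pattern $y\,\ell\,s$, so $s=\ell-1$. Repeating this with $p^{(j+1)}$, whose tail is $\ell,\ell-1,\ldots,p(j)$, forces the next entry of $\sigma$ to be the largest remaining one.

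This gives two families.
\begin{itemize}
\item If $1\le\ell\le k-2$, then $\sigma$ is forced to be entirely decreasing and $p=\ell(\ell-1)\cdots1\,n(n-1)\cdots(\ell+1)$, a single permutation. All of its rotations are concatenations of decreasing runs in which each run, when followed by a run of larger values, ends the word. I would check directly that such words avoid $231$, as in Case~1 of Theorem~\ref{Theorem:MonotonePatterns}. These values of $\ell$ contribute $k-2$.
\item If $\ell\ge k-1$, then $p=\ell(\ell-1)\cdots(\ell-k+2)\,\sigma''\,\tau$, with $\sigma''$ a $231$-avoider of $\{1,\ldots,\ell-k+1\}$ and $\tau$ decreasing. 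These values of $\ell$ contribute $\sum_{\ell=k-1}^{n}C_{\ell-k+1}=\sum_{i=0}^{n-k+1}C_i$.
\end{itemize}
(When $\ell=n$ the block $\tau$ is empty; this is already covered by the second family.) Adding the two families gives $k-2+\sum_{i=0}^{n-k+1}C_i$.

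\textbf{Main obstacle.} The delicate part is the converse check for the second family. One must show that for every $i\le k$ the rotation
\[
p^{(i)}=(\ell-i+1)\cdots(\ell-k+2)\,\sigma''\,\tau\,\ell\cdots(\ell-i+2)
\]
avoids $231$. I would argue by the position of the smallest letter $a$ of a putative pattern $b\,c\,a$. If $a$ lies in the final decreasing block, then $b<c$ with both greater than $a$ and earlier. The only available ascents are inside $\sigma''$ or from $\sigma''$ or the first block into $\tau$. In each case $b<a$ or $b>\ell$, neither of which can lie between $a$ and $c$ in value. The cases where $a$ lies in $\sigma''$ or in $\tau$ reduce to $\sigma''$ avoiding $231$, $\tau$ being decreasing, and all larger values lying after $\sigma''$.
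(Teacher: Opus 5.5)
Your proposal is correct and is essentially the paper's proof read through the complement map $p\mapsto p^c$. The paper works with $213$ and shows that $S_n^{(2)}(213)$ consists of the permutations $p=\ell\,\alpha\,(1,2,\ldots,\ell-1)$ with $\alpha$ a $213$-avoider. For $k\ge3$ it forces $p(2)\cdots p(k-1)=(\ell+1,\ldots,\ell+k-2)$. Complementing gives exactly your block structure and your forcing step; the only difference is that the paper puts your boundary value $\ell=k-1$ in the single-permutation family, so it sums to $k-1+\sum_{i=1}^{n-k+1}C_i$. One small tidy-up in your final verification: ascents from the first block or from $\sigma''$ into the final block $\ell\cdots(\ell-i+2)$ also exist, but these also have $b<a$, so your conclusion stands.
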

\begin{proof}
Since $(213)^c=231$, $(213)^r=312$, and $(312)^c=132$, by Theorem~\ref{Theorem:ComplementReverse}, it suffices to prove the result for $S_n^{(k)}(213)$.

We first prove this for $k=2$. Let $\mathcal{B}\subseteq S_n$ be the set of permutations such 
that for all $p\in\mathcal{B}$ with $p(1)=\ell$, 
we have $p=(\ell,p(2),p(3),\ldots,p(n-\ell+1),1,2,\ldots,\ell-1)$, where $p(2)p(3)\cdots p(n-\ell+1)$ 
avoids $213$.  By Theorem~\ref{Theorem:ClassicalSingle3}, for each $\ell$ there are $C_{n-\ell}$ 
possibilities for $p(2)p(3)\cdots p(n-\ell+1)$. Hence we have
\[
|\mathcal{B}|=\sum_{\ell=1}^nC_{n-\ell}=\sum_{\ell=0}^{n-1}C_\ell.
\]

We will show that $S_n^{(2)}(213)=\mathcal{B}$. We first prove that $\mathcal{B}\subseteq S_n^{(2)}(213)$. Let $p\in\mathcal{B}$ with $p(1)=\ell$. We need to show that $p$ and $p^{(2)}$ both avoid $213$. 
Suppose, by way of contradiction, that $abc$ is a $213$ pattern in $p$. Then we have $b<a<c$. If $a=\ell$, 
then since $b<a$ and $c>a$, $c$ must come before $b$, which is a contradiction. 
If $a>\ell$, then since $c>a>\ell$ and $b$ comes before $c$, $abc$ is a subpermutation of $p(2)p(3)\cdots p(n-\ell+1)$, which is a contradiction. If $a<\ell$, then $abc$ is a subpermutation of $(1,2,\ldots,\ell-1)$, which is again a contradiction. Hence, we have $p\in S_n(213)$. We still need to show that $p\in S_n^{(2)}(213)$. 
Suppose that $abc$ is a $213$ pattern in $p^{(2)}=(p(2),p(3),\ldots,p(n-\ell+1),1,2,\ldots,\ell-1,\ell)$. Then $b<a<c$. If $c\leq\ell$, then $abc$ is a 
subpermutation of $(1,2,\ldots\ell-1,\ell)$, which is a contradiction. If $c>\ell$, then $abc$ is a 
subpermutation of $p(2)p(3)\cdots p(n-\ell+1)$. Since $p(2)p(3)\cdots p(n-\ell+1)$ is $213$ avoiding, we again have a contradiction. This proves that $\mathcal{B}\subseteq S_n^{(2)}(213)$.

Now we prove that $S_n^{(2)}(213)\subseteq\mathcal{B}$. Let $p\in S_n^{(2)}(213)$ with $p(1)=\ell$. Since $p^{(2)}$ avoids $213$, the subpermutation of $p$ on $\{1,2,\ldots,\ell-1\}$ must be $(1,2,\ldots,\ell-1)$. Otherwise, there would exist $a,b\in\{1,2,\ldots,\ell-1\}$ such that $a<b$ and $ba\ell$ is a subpermutation of $p^{(2)}$. Now let $x>\ell$ and $y<\ell$. If $y$ comes before $x$, then the subpermutation $\ell yx$ of $p$ would be a $213$ pattern. Hence $x$ comes before $y$. It follows that $p=(\ell,p(2),p(3),\ldots,p(n-\ell+1),1,2,\ldots,\ell-1)$. Since $p$ avoids $213$, we 
have that $p(2)p(3)\cdots p(n-\ell+1)$ avoids $213$. Hence we have $S_n^{(2)}(213)\subseteq\mathcal{B}$. This completes the proof for $k=2$.

Now suppose $k\geq3$. Note that $S_n^{(k)}(213)\subseteq S_n^{(2)}(213)=\mathcal{B}$. So for all $p\in S_n^{(k)}(213)$ with $p(1)=\ell$, we have $p=(\ell,p(2),p(3),\ldots,p(n-\ell+1),1,2,\ldots,\ell-1)$ 
where $p(2)p(3)\cdots p(n-\ell+1)$ avoids $213$. Let $p\in S_n^{(k)}(213)$ with $p(1)=\ell$.

\textbf{Case 1}: $n-\ell\leq k-2$. Then $n-\ell+1\leq k-1$. In this case, we must have $p(2)p(3)\cdots p(n-\ell+1)=(\ell+1,\ell+2,\ldots,n)$. 
Suppose not. Then there exist $x>y>\ell$ such that $x$ comes before $y$. 
Then $y\ell x$ is a $213$ pattern and a subpermutation of $p^{(i)}$ for some $i\leq k$,  a contradiction.

Hence, for each $\ell$ with $n-\ell\leq k-2$, there is a unique $p\in S_n^{(k)}(213)$ with $p(1)=\ell$. Note that if $n-\ell\leq k-2$, then $\ell\geq n-k+2$. So there are $k-1$ such values for $\ell$.

\textbf{Case 2}: $n-\ell\geq k-1$. 
Then, similar to Case 1, we have $p(2)p(3)\cdots p(k-1)=(\ell+1,\ell+2,\ldots,\ell+k-2)$. 
By Theorem~\ref{Theorem:ClassicalSingle3} there are $C_{n-(\ell+k-2)}$ possibilities for $p(k)p(k+1)\cdots p(n-\ell+1)$. We still need to prove that all these possibilities belong to $S^{(k)}_n(213)$. Let $p$ be such a permutation. Then $p=(\ell,\ell+1,\ldots,\ell+k-2,\alpha,1,2,\ldots,\ell-1)$ where $\alpha$ is a $213$-avoiding permutation of $\{\ell+k-1,\ell+k,\ldots,n\}$. It follows that $p^{(k)}=(\alpha,1,2,\ldots,\ell+k-2)$ and $p^{(i)}=(\ell+i-1,\ell+i,\ldots,\ell+k-2,\alpha,1,2,\ldots,\ell+i-2)$ for $i\in\{1,2,\ldots,k-1\}$. 
Suppose, by way of contradiction, that $abc$ is a $213$ pattern in $p^{(k)}$. Since $\alpha$ avoids $213$, we have $c\leq\ell+k-2$. Since $b<a<c$, we have that $abc$ is a subpermutation of $(1,2,\ldots,\ell+k-2)$, which is a contradiction. Now let $i\in\{1,2,\ldots,k-1\}$. 
Suppose, by way of contradiction, that $abc$ is a $213$ pattern in $p^{(i)}$. Similar to $p^{(k)}$, if $c\leq \ell+i-2$ then we would have a contradiction. So we assume that $c>\ell+i-2$.
Then $abc$ is a subpermutation of $(\ell+i-1,\ell+i,\ldots,\ell+k-2,\alpha)$. Since $\alpha$ avoids $213$, we have $a\leq\ell+k-2$ and hence $b<a\leq \ell+k-2$. This implies that $ab$ is a subpermutation of $(\ell+i-1,\ell+i,\ldots,\ell+k-2)$, which is a contradiction. Hence, in this case there are exactly $C_{n-(\ell+k-2)}$ possibilities for each possible value $\ell$. 
Note that if $n-\ell\geq k-1$, then $\ell\leq n-k+1$.

Adding all the possibilities together, we find
\[
|S_n^{(k)}(213)|=k-1+\sum_{\ell=1}^{n-k+1}C_{n-(\ell+k-2)}=k-1+\sum_{\ell=1}^{n-k+1}C_\ell=k-2+\sum_{\ell=0}^{n-k+1}C_\ell.
\]
This completes the proof.
\end{proof}
By Theorem~\ref{Theorem:NonMonotonePatterns}, if $q\in S_3$ is not monotone, then $|S_n^{(2)}(q)|$ is the sum of first $n$ Catalan numbers starting with $C_0$. Setting $k=n$, Theorems~\ref{Theorem:MonotonePatterns} and \ref{Theorem:NonMonotonePatterns} together recover the fact that $|S_n^{(n)}(q)|=n$ for all $q\in S_3$. By Theorems~\ref{Theorem:MonotonePatterns} and \ref{Theorem:NonMonotonePatterns}, there are two Wilf-equivalence 
classes for $k\geq2$: one contains monotone patterns while the other one contains nonmonotone patterns. Incidentally, these classes coincide with the Wilf-equivalence classes of permutations containing a given pattern of length three $r$ times, with $r\geq 1$ (see, for example, \cite{CG2025}).

\section{Avoiding Rotational 3-Chains}\label{Section:Rot3Chain}
Next we enumerate $S_n(q:q^{(2)}:q^{(3)})$ for $q\in S_3$. Recall that $S_n(q:q^{(2)}:q^{(3)})$ is the set of $p\in S_n$ such that $p$ avoids $q$, $p^{(2)}$ avoids $q^{(2)}$, and $p^{(3)}$ avoids $q^{(3)}$. Throughout this section, we assume that $n\geq3$.

We first note that a rotation of a permutation contains a pattern if and only if the permutation contains a rotation of the pattern. This allows us to rephrase results on pattern avoidance for circular permutations and use them for permutations avoiding rotational 3-chains.

\begin{lemma}\label{Lemma:AvoidingAllRotations}
Let $n,m\in\mathbb{N}$, $p\in S_n$, and $q\in S_m$. Then a rotation of $p$ contains $q$ as a pattern if and only if $p$ contains a rotation of $q$ as a pattern.
\end{lemma}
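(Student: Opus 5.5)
The plan is to argue directly through the positions of an occurrence of the pattern, reading positions cyclically. The key observation is that a rotation $p^{(k)}$ reads the entries of $p$ in cyclic order starting from position $k$. So a subsequence of $p^{(k)}$ corresponds to a set of positions of $p$, listed in the cyclic order that starts at $k$.

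For the forward direction, suppose $p^{(k)}$ contains $q$. Then there are positions $i_1<\cdots<i_m$ of $p^{(k)}$ whose entries form a $q$ pattern. Translate these back to positions of $p$. Those lying in the first block, $p(k)\ldots p(n)$, become positions $j$ with $j\ge k$. Those lying in the second block, $p(1)\ldots p(k-1)$, become positions $j$ with $j<k$.

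Suppose the first $s$ of the occurrence's entries lie in the first block and the remaining $m-s$ lie in the second. Reading the same entries of $p$ in increasing order of position, we see the last $m-s$ entries first and then the first $s$. This is the entry sequence $q(s+1)\ldots q(m)q(1)\ldots q(s)$, up to order-isomorphism. That is exactly the rotation $q^{(s+1)}$ (with $s=m$ giving $q^{(1)}=q$). Since relative orders are preserved, $p$ contains $q^{(s+1)}$, which is a rotation of $q$.

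For the converse, suppose $p$ contains $q^{(t)}$ at positions $j_1<\cdots<j_m$. The entries at $j_1,\ldots,j_{m-t+1}$ realize $q(t)\ldots q(m)$, and the entries at $j_{m-t+2},\ldots,j_m$ realize $q(1)\ldots q(t-1)$. Take $k=j_{m-t+2}$, or $k=1$ if $t=1$. In $p^{(k)}$, positions $\ge k$ come first and positions $<k$ come after them. So the entries at $j_{m-t+2},\ldots,j_m$ appear first, followed by those at $j_1,\ldots,j_{m-t+1}$. This yields the subsequence $q(1)\ldots q(m)$ up to order-isomorphism, so $p^{(k)}$ contains $q$.

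The only real obstacle is the index bookkeeping: checking that splitting an occurrence at the wrap-around point gives exactly the rotation $q^{(s+1)}$ under the paper's convention $q^{(k)}=q(k)\ldots q(m)q(1)\ldots q(k-1)$. I would verify this carefully, including the edge cases where the occurrence lies entirely in one block. Everything else follows because pattern containment depends only on relative order, and rotation does not change values.
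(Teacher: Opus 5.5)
Your proof is correct. The paper gives no proof of this lemma: it is stated as an immediate observation (``We first note that\ldots'') and then used to obtain Lemma~\ref{Lemma:PermutationsAvRotations3}. So your cyclic-position argument fills in the omitted details rather than competing with a different route.

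The bookkeeping checks out in both directions.
\begin{itemize}
\item Forward: take an occurrence of $q$ in $p^{(k)}$ whose first $s$ entries lie in the block $p(k)\cdots p(n)$. Read in $p$, it becomes $q(s+1)\cdots q(m)q(1)\cdots q(s)=q^{(s+1)}$, and $s\in\{0,m\}$ gives $q$ itself.
\item Converse: take an occurrence of $q^{(t)}$ at positions $j_1<\cdots<j_m$ of $p$. It becomes an occurrence of $q$ in $p^{(j_{m-t+2})}$, or in $p$ itself when $t=1$. This works because $j_{m-t+1}<j_{m-t+2}$ sends the first $m-t+1$ entries behind the last $t-1$.
\end{itemize}

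Your argument also proves the existential equivalence in exactly the form the paper needs later: every rotation of $p$ avoids $q$ if and only if $p$ avoids every rotation of $q$.
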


By Lemma~\ref{Lemma:AvoidingAllRotations} and the fact that $
\text{Av}_n[123]=\{[(n,n-1,\ldots1)]\}$ and $
\text{Av}_n[321]=\{[(1,2,\ldots,n)]\}$ as discussed in Section~\ref{Section:Intro}, permutations avoiding all rotations of given pattern of length three have unique structures.

\begin{lemma}\label{Lemma:PermutationsAvRotations3}
Let $n\geq1$. If $q\in \{123,231,312\}\subseteq S_3$ then $p\in S_n$ avoids all rotations of $q$ if and only if $p$ is a rotation of $(n,n-1,\ldots,1)$. Similarly, if $q\in \{321,213,132\}\subseteq S_3$ then $p\in S_n$ avoids all rotations of $q$ if and only if $p$ is a rotation of $(1,2,\ldots, n)$.
\end{lemma}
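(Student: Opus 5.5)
The plan is to reduce the statement to the two facts about circular avoidance quoted in Section~\ref{Section:Intro}, namely $\text{Av}_n[123]=\{[(n,n-1,\ldots,1)]\}$ and $\text{Av}_n[321]=\{[(1,2,\ldots,n)]\}$, via Lemma~\ref{Lemma:AvoidingAllRotations}. The first observation is that the rotations of each $q\in\{123,231,312\}$ are exactly the three permutations $123,231,312$. Hence ``$p$ avoids all rotations of $q$'' means the same thing for each $q$ in this set: $p$ avoids $123$, $231$ and $312$. Likewise, for $q\in\{321,213,132\}$, the condition says $p$ avoids $321$, $213$ and $132$. So it suffices to treat $q=123$ and $q=321$, and by complementation (Definition~\ref{Definition:Complements}: complement maps $\{123,231,312\}$ to $\{321,213,132\}$ and rotations of $(n,\ldots,1)$ to rotations of $(1,\ldots,n)$) it suffices to treat $q=123$.

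For $q=123$, the key step is to show that $p$ avoids every rotation of $123$ if and only if every rotation $p'$ of $p$ avoids $123$, i.e.\ $[p]\in\text{Av}_n[123]$. I would derive this from Lemma~\ref{Lemma:AvoidingAllRotations}. If some rotation $p'$ of $p$ contains $123$, the lemma applied to $p$ says $p$ contains a rotation of $123$. Conversely, if $p$ contains a rotation $q'$ of $123$, then $123$ is a rotation of $q'$, and the lemma applied to the pair $(p,q')$ shows that some rotation of $p$ contains $123$. Thus $p$ avoids all rotations of $123$ exactly when $[p]$ avoids $123$. By the quoted fact this holds exactly when $[p]=[(n,n-1,\ldots,1)]$, that is, when $p$ is a rotation of $(n,n-1,\ldots,1)$.

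Small cases need a check: for $n\le 2$ there are no length-three patterns, so every $p$ qualifies, and every $p\in S_1\cup S_2$ is indeed a rotation of the decreasing permutation. This is consistent, and the quoted characterization of $\text{Av}_n[123]$ is needed only for $n\ge3$.

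The main obstacle is that Lemma~\ref{Lemma:AvoidingAllRotations} is stated without proof. If a self-contained argument is wanted, I would prove it directly. An occurrence of $q'=q^{(j)}$ in $p$ at positions $i_1<\cdots<i_m$ becomes an occurrence of $q$ in the rotation $p^{(i_{m-j+2})}$, because rotating so that the entry matching $q(1)$ comes first preserves the cyclic order of the chosen entries. The converse is symmetric: an occurrence in $p^{(s)}$ unrotates to an occurrence in $p$ of the rotation of $q$ determined by where the cut position $s$ falls among the chosen entries. With that in hand, the remaining steps are routine.
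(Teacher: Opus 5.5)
Your proof is correct and is essentially the paper's own argument: the paper justifies this lemma in one line by combining Lemma~\ref{Lemma:AvoidingAllRotations} with the known facts $\text{Av}_n[123]=\{[(n,n-1,\ldots,1)]\}$ and $\text{Av}_n[321]=\{[(1,2,\ldots,n)]\}$, and your complement reduction and your sketch of Lemma~\ref{Lemma:AvoidingAllRotations} (which the paper states without proof) just fill in details. One small wording fix: the converse step is the ``if'' direction of Lemma~\ref{Lemma:AvoidingAllRotations} with $q=123$ itself (since $p$ contains a rotation of $123$, some rotation of $p$ contains $123$), whereas applying it to the pair $(p,q')$ only gives the trivial fact that some rotation of $p$ contains $q'$.
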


We first show that if $q\in S_3$ and $q^{(2)}$ is not monotone, then $|S_n(q:q^{(2)}:q^{(3)})|=5n-10$.
\begin{theorem}\label{Theorem:3ChainLinear}
For all $n\geq3$ and $q\in\{123,321,213,231\}$, we have
\[
|S_n^{(3)}(q:q^{(2)}:q^{(3)})|=5n-10.
\]
\end{theorem}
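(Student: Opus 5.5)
By Theorem~\ref{Theorem:ComplementReverse}, the four chains collapse to one case. The chains in question are
\[
(123:231:312),\quad (321:213:132),\quad (213:132:321),\quad (231:312:123).
\]
The complement of $(123:231:312)$ is $(321:213:132)$. The reverse-with-order-reversal of $(123:231:312)$ is $(312^r:231^r:123^r)=(213:132:321)$. The complement of $(213:132:321)$ is $(231:312:123)$. So it suffices to show $|S_n^{(3)}(123:231:312)|=5n-10$. Concretely, we must count the $p\in S_n$ such that $p$ avoids $123$, $p^{(2)}$ avoids $231$, and $p^{(3)}$ avoids $312$.

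Write $a=p(1)$, $b=p(2)$ and $r=p(3)p(4)\cdots p(n)$. Then
\[
p=a\,b\,r,\qquad p^{(2)}=b\,r\,a,\qquad p^{(3)}=r\,a\,b.
\]
The block $r$ is a factor of all three words, so it must avoid $123$, $231$ and $312$, i.e.\ all rotations of $123$. By Lemma~\ref{Lemma:PermutationsAvRotations3}, $r$ (standardized) is a rotation of the decreasing permutation. Concretely, if the values of $r$ are $x_1<\cdots<x_{n-2}$, then for some $s\in\{0,\dots,n-3\}$,
\[
r=(x_s,x_{s-1},\dots,x_1,\;x_{n-2},\dots,x_{s+1}).
\]
So $r$ is a decreasing block of small values followed by a decreasing block of large values, and is determined by the pair $(a,b)$ and the split $s$. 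This reduces the problem to a finite bookkeeping over the parameters $(a,b,s)$. It remains to impose three conditions: $abr$ avoids $123$, $bra$ avoids $231$, and $rab$ avoids $312$. In each, a forbidden occurrence must use $a$ or $b$.

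Next I split on whether $a<b$ or $a>b$. For each, I locate $a$ and $b$ relative to the threshold between the low block $L=\{x_1,\dots,x_s\}$ and the high block $H=\{x_{s+1},\dots,x_{n-2}\}$ of $r$. The conditions translate into simple inequalities.
\begin{itemize}
\item If $a<b$, no element of $r$ may exceed $b$ (avoiding $123$ in $abr$), so $b=n$. Further constraints then come from $bra$ avoiding $231$, which forbids $c<a$ appearing after $b$'s larger partners, and from $rab$ avoiding $312$.
\item If $a>b$, then $abr$ avoids $123$ iff no element of $r$ exceeding $b$ is preceded in $r$ by a smaller element that is still greater than $b$. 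This constrains where $b$ sits relative to $L$ and $H$. Similarly, $bra$ and $rab$ constrain where $a$ sits.
\end{itemize}
In each subcase I expect the surviving configurations to form families parametrized by one free value, either the split point or the value of $a$ (or $b$). These should contribute $O(n)$ permutations each, plus a few boundary exceptions such as $s=0$, $L$ empty, or $a,b$ being extreme values. Summing the families should give $5n-10$.

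I will sanity-check the count at $n=3$ (expected $5$) and $n=4$ (expected $10$) by direct listing. This also guards against double counting at the boundaries, for instance $s=0$ versus $s=n-3$ when $H$ or $L$ is a single element, or when $r$ is monotone in two ways.

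The main obstacle is the second step: organizing the case analysis on $(a,b,s)$ so that the families are disjoint and exhaustive, and correctly handling degenerate cases where a block is empty or of size one. Here the description of $r$ is ambiguous and configurations could be counted twice. To control this, I will parametrize directly by the value set of $L$ in $r$, taking $L$ possibly empty and requiring $H$ nonempty. I will then verify each family both ways, as in the proofs of Theorems~\ref{Theorem:MonotonePatterns} and \ref{Theorem:NonMonotonePatterns}: first show that every $p$ in the set has the claimed form, then show that every such form satisfies all three avoidance conditions.
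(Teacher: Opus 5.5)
\textbf{The enumeration itself is missing.} Your reduction to $(123:231:312)$ matches the paper's opening, as does the observation that $r=p(3)\cdots p(n)$ must be a rotation of the decreasing permutation on its values. Your reformulation of the $123$ condition when $a>b$ is also correct. After that, though, the proposal only forecasts. You never determine which triples $(a,b,s)$ survive, and you never show that the survivors satisfy all three conditions. The count $5n-10$ appears only as something the sum ``should give''. That enumeration is the whole content of the theorem, and checks at $n=3,4$ plus an $O(n)$ heuristic do not establish an exact formula for all $n$. So as written this is a plan, not a proof.

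Your $a<b$ bullet also names the wrong constraint. Once $b=n$, the word $n\,r\,a$ automatically avoids $231$ whenever $a\,n\,r$ avoids $123$. The entry $n$ cannot occur in a $231$ there, and an occurrence $x\,y\,a$ with $x,y$ in $r$ would give $a<x<y$ with $x$ before $y$, hence a $123$ occurrence $a\,x\,y$ in $p$. The binding constraint is the $312$ condition on $r\,a\,n$.

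\textbf{How to close it with your parametrization.} Write $r=L^{\downarrow}H^{\downarrow}$ with every element of $L$ below every element of $H$, and $H\neq\emptyset$. The increasing pairs of $r$ are exactly the pairs in $L\times H$, and the decreasing pairs lie inside $L$ or inside $H$. Every forbidden occurrence must use $a$ or $b$, so each avoidance condition becomes an exact statement about these pairs.

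For $a<b$, you get $b=n$. The $123$ condition is $L\subseteq\{1,\dots,a-1\}$. The $312$ condition says no decreasing pair $x\,y$ of $r$ has $x>a>y$. Hence either $a=n-1$ and all $n-2$ rotations work, or $a\le n-2$ and $L=\{1,\dots,a-1\}$, $H=\{a+1,\dots,n-1\}$ are forced.

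For $a>b$, the $123$ condition is $L\subseteq\{1,\dots,b-1\}$. The $231$ condition on $b\,r\,a$ then says no decreasing pair of $r$ straddles $b$. So either $(a,b)=(n,n-1)$, where any rotation works and the $312$ condition is vacuous, or $L=\{1,\dots,b-1\}$. In the latter case, the $312$ condition on $r\,a\,b$ says $H=\{b+1,\dots,n\}\setminus\{a\}$ has no values on both sides of $a$. This forces $a=n$ or $a=b+1$.

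This gives five disjoint families of size $n-2$ each. They are exactly the paper's Cases 4, 5, 2, 3, 1, in that order. Each step is an equivalence, so membership of the survivors in the set is proved at the same time. This is the argument you still need to write down.
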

\begin{proof}
We will prove that $|S_n^{(3)}(123:231:312)|=5n-10$. Since
\[
(321:213:132)=((123)^c:(231)^c:(312)^c),
\]
\[
(213:132:321)=((312)^r:(231)^r:(123)^r),
\]
and
\[
(231:312:123)=((213)^c:(132)^c:(321)^c),
\]
other results in the statement are consequences of Theorem~\ref{Theorem:ComplementReverse}. 

It is easy to see that $S_3^{(3)}(123:231:312)=S_3\backslash\{123\}$ and hence $|S_n^{(3)}(123:231:312)|=5n-10$ holds for $n=3$. Now, we assume that $n\geq4$.

Let $p\in S_n^{(3)}(123;231;312)$. Write $\sigma=p(3)p(4)\cdots p(n)$. Since $p=p(1)p(2)\sigma$, $p^{(2)}=p(2)\sigma p(1)$, 
and $p^{(3)}=\sigma p(1)p(2)$, we have that $\sigma$ avoids $123$, $231$, and $312$. Let $\pi$ denote the decreasing permutation on $\{1,2,\ldots,n\}\backslash\{p(1),p(2)\}$. By Lemma~\ref{Lemma:PermutationsAvRotations3}, $\sigma$ is a rotation of $\pi$. There are five 
disjoint cases depending on $p(1)$ and $p(2)$.

{\bf Case 1}:
$p(1)\neq n$ and $p(2)\neq n$. In this case, we must have $p(1)>p(2)$ because otherwise $p(1)p(2)n$ would be a $123$ pattern in $p$. We also have $p(n)>p(1)$ because otherwise $np(n)p(1)$ would be a $312$ pattern in $p^{(3)}$.
Finally, we must have $p(1)=p(2)+1$. Indeed, if $p(1)>p(2)+1$, then since $p(n)>p(1)$ and $\sigma$ is a rotation of $\pi$, we have that $(p(2),p(2)+1,p(n))$ is a $123$ pattern in $p$, which is a contradiction. Note also that if $p(2)>1$, then $p(3)<p(2)$ because otherwise $p(2)p(3)1$ is a $231$ pattern in $p^{(2)}$. To summarize, we must have 
\[
p=(a+1,a,a-1,1,\ldots,n,n-1,\ldots,a+1),
\]
where $a \in \{1,2, \dots, n-2\}$. Hence, $p$ must be a rotation of $(n,n-1,\ldots,1)$. Since all the rotations of $(n,n-1,\ldots,1)$ avoids the rotational chain $(123;231;312)$ by Lemma~\ref{Lemma:PermutationsAvRotations3}, there are $n-2$ such permutations in this case.

{\bf Case 2}: $p(1)=n$ and $p(2)=n-1$. Then $\pi=(n-2,n-3,\ldots,1)$. Notice that $\sigma$ is allowed to be any rotation of $\pi$. To see this, let $x,y\in\{1,2,\ldots,n-2\}$. Then none of $nxy$, $(n-1,x,y)$, or $(n,n-1,x)$ is a $123$ pattern in $p$. 
Similarly, none of $(n-1,x,y)$, $xyn$, or $(n-1,x,n)$ is a $231$ pattern in $p^{(2)}$ and none of $(x,n,n-1)$, $xyn$, or $(x,y,n-1)$ is a $312$ pattern in $p^{(3)}$. Since there are $n-2$ rotations of $\pi$, we have $n-2$ possibilities for $p$.

{\bf Case 3}: $p(1)=n$ and $p(2)\neq n-1$. Then $p(n)>p(2)$ because otherwise $(p(2),n-1,p(n))$ would be a $231$ pattern in $p^{(2)}$.
We also have $p(3)=n-1$ or $p(3)<p(2)$ because otherwise $(p(2),p(3),n-1)$ would be a $123$ pattern in $p$. To summarize, $p$ must be of the form \[\alpha_a:=(n,a,a-1,\ldots,1,n-1,n-2,\ldots,a+1),\]
with $a\in\{1,2,\ldots,n-2\}$. We still need to show that $\alpha_a\in S_n^{(3)}(123:231:312)$ for all $a\in\{1,2,\ldots,n-2\}$. To see this, let $a\in\{1,2,\ldots,n-2\}$ and $x,y\in\{1,2,\ldots,n-1\}\backslash\{a\}$. Since $\alpha_a$ is a cancatenation of two decreasing permutations, $\alpha_a$ does not contain the pattern $123$. Since $\alpha_a^{(2)}=\beta n$, where $\beta$ is a rotation of $(n-1,n-2,\ldots,1)$, we see that $\alpha_a^{(2)}$ does not contain the pattern $231$. Finally, let $xy$ be a 
subpermutation of $\alpha_a^{(3)}(1)\alpha_a^{(3)}(2)\cdots\alpha_a^{(3)}(n-2)$. If $x<a$ then none of $xya$, $xyn$, $xna$, or $yna$ is a $312$ pattern; and if $x>a$ then $y>a$ by the form of $\alpha_a$ and hence none of $xya$, $xyn$, $xna$, or $yna$ is a $312$ pattern. 
Therefore we have $n-2$ possibilities for $p$.

{\bf Case 4}: $p(1)=n-1$ and $p(2)=n$. Then $\pi=(n-2,n-3,\ldots,1)$. Similar to Case 2, $\sigma$ is allowed to be any rotation of $\pi$ and hence we have $n-2$ possibilities for $p$.

{\bf Case 5}: $p(1)\neq n-1$ and $p(2)=n$. Then $p(n)>p(1)$ because otherwise $(n-1,p(n),p(1))$ would be a $312$ pattern in $p^{(3)}$. We also have $p(3)=n-1$ or $p(3)<p(1)$ because otherwise $(p(1),p(3),n-1)$ would be a $123$ pattern in $p$. Hence, $p$ must be of the form
\[p=(a,n,a-1,a-2,\ldots,1,n-1,n-2,\ldots,a+1),
\]
where $a\in\{1,2,\ldots,n-2\}$. Using a similar argument as in Case 3, we can see that all these permutations avoid the rotational 3-chain $(123:231:312)$. Therefore, we have $n-2$ possibilities for $p$ in this case.

Adding all the possibilities together, we have \[|S_n^{(3)}(123;231;312)|=(n-2)+(n-2)+(n-2)+(n-2)+(n-2)=5n-10.\]

This completes the proof.
\end{proof}

We now enumerate $S_n(q:q^{(2)}:q^{(3)})$ for $q\in S_3$, where $q^{(2)}$ is monotone
and show that in contrast to Theorem~\ref{Theorem:3ChainLinear}, we now have $|S_n(q:q^{(2)}:q^{(3)})|=\half (n+7)(n-2)$.

\begin{theorem}\label{Theorem:3ChainQuadratic}
    For all $n\geq3$, we have
    \[
    |S_n^{(3)}(312:123:231)|=|S_n^{(3)}(132:321:213)|=\half (n+7)(n-2).
    \]
\end{theorem}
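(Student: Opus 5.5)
By Theorem~\ref{Theorem:ComplementReverse}, $(132:321:213)=((312)^c:(123)^c:(231)^c)$, so it suffices to prove $|S_n^{(3)}(312:123:231)|=\half(n+7)(n-2)$. For $n=3$ the only excluded permutation is $312$: $p$ contains $312$, $p^{(2)}=123$, and $p^{(3)}=231$ each force $p=312$. This gives $5=\half(10)(1)$. So I assume $n\ge4$ and follow the template of Theorem~\ref{Theorem:3ChainLinear}.

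Write $p=ab\sigma$ with $a=p(1)$, $b=p(2)$ and $\sigma=p(3)\cdots p(n)$. Then $p=ab\sigma$, $p^{(2)}=b\sigma a$ and $p^{(3)}=\sigma ab$, so $\sigma$ avoids $312$, $123$ and $231$, which are all rotations of $312$. By Lemma~\ref{Lemma:PermutationsAvRotations3}, $\sigma$ is a rotation of the decreasing permutation $\pi$ of $\{1,\ldots,n\}\setminus\{a,b\}$. Every candidate is therefore determined by a pair $(a,b)$ together with a cut point of $\pi$, giving $n(n-1)(n-2)$ candidates in total. The task is to decide, for each pair $(a,b)$, which of the $n-2$ rotations survive.

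The conditions to test are:
\begin{itemize}
\item $p$ has no $312$ pattern with first entry $a$ or $b$;
\item $b\sigma a$ has no $123$ pattern involving $b$ or $a$;
\item $\sigma ab$ has no $231$ pattern involving $a$ or $b$.
\end{itemize}
Patterns lying entirely inside $\sigma$ are already excluded. Write $\sigma=(c,c-1,\ldots,1,\,n,\ldots,c+1)$ restricted to the remaining values, so that it consists of a ``low'' decreasing block followed by a ``high'' decreasing block. Each condition then becomes an inequality between $a$, $b$ and the cut value. For example, with $\sigma=LH$:
\begin{itemize}
\item $b\sigma a$ avoids $123$ iff there is no $x<y$ with $b<x$ in $L$ and $y$ in $H$, together with the analogous conditions ending in $a$;
\item $\sigma ab$ avoids $231$ iff no $x$ in $\sigma$ lies between suitable values relative to $a,b$ in the wrong order.
\end{itemize}

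I would split into cases as in Theorem~\ref{Theorem:3ChainLinear}. The first split is $a>b$ versus $a<b$. If $a>b$, then no value strictly between $b$ and $a$ may occur in $\sigma$, since it would form $a\,b\,x$, a $312$. Hence $a=b+1$, giving only about $n$ pairs, and I would count the admissible rotations for each. If $a<b$, the $312$ condition on $p$ forces the elements of $\sigma$ smaller than $a$ (and those smaller than $b$ but larger than...) to appear in decreasing order after larger ones, which restricts the cut point. The conditions from $p^{(2)}$ and $p^{(3)}$ further tie the cut to $a$ and $b$. Subcases will likely depend on whether $b=n$, whether $a=1$, and on adjacency of $a$ and $b$, much like Cases 1--5 of Theorem~\ref{Theorem:3ChainLinear}.

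I expect the total to decompose as a quadratic family plus linear families. Concretely, some family of pairs $(a,b)$ with $a<b$ should admit essentially one rotation each, contributing about $\binom{n-2}{2}$ or $\binom{n-1}{2}$, and several families should each contribute $n-2$. The target identity is $\half(n+7)(n-2)=\binom{n-2}{2}+5(n-2)$, i.e. $\half(n-2)(n-3)+5(n-2)$, which fits exactly this shape. I would check the case totals against $n=4$, where the answer is $11$, and $n=5$, where it is $18$. The main obstacle will be the $a<b$ case. There the admissible cut points depend jointly on $a$ and $b$, and I must verify both necessity (each forbidden configuration produces an explicit $312$, $123$ or $231$) and sufficiency (the surviving forms avoid all three patterns). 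For sufficiency I plan to use the block decomposition of $p^{(i)}$ into decreasing runs, as in Case 3 of Theorem~\ref{Theorem:MonotonePatterns}.
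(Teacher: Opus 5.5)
Your setup is correct and matches the paper's own framework: the complement reduction, the $n=3$ check, writing $p=ab\sigma$ with $\sigma$ a rotation of the decreasing permutation $\pi$ via Lemma~\ref{Lemma:PermutationsAvRotations3}, and the deduction $a=b+1$ when $a>b$. But the proposal stops exactly where the proof has to start. The theorem is the count, and you never determine, for a given $(a,b)$, which rotations of $\pi$ survive. Instead the total is fitted to the target formula (``I expect'', ``should admit'', ``will likely depend''). Agreement at $n=4,5$ and the identity $\half(n+7)(n-2)=\binom{n-2}{2}+5(n-2)$ prove nothing in general. Moreover, the decomposition you anticipate is not the one that occurs. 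The surviving permutations split as $\binom{n-1}{2}+4(n-2)$, with linear pieces of sizes $1$, $n-1$, $n-2$, $n-3$, $n-3$. A case analysis steered by your guess would therefore be looking for families that do not exist.

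The missing idea is to split on whether $p(3)=n$, i.e.\ whether $\sigma=\pi$. If $n\in\{a,b\}$, a $312$ beginning at $n$ forbids any increasing pair after it. This forces either $(a,b)=(n,n-1)$ with $\sigma=\pi$ (one permutation) or $b=n$ with $\sigma=\pi$ ($n-1$ permutations). Otherwise $n\in\sigma$. If then $\sigma\neq\pi$, we get $p(3)<p(n)$, and every value strictly between them lies in $\{a,b\}$.

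For $a<b<n$, the choice $\sigma=\pi$ always works, which gives the quadratic family of size $\binom{n-1}{2}$. If $\sigma\neq\pi$, then $p(n)>b$ (else $b\,p(3)\,p(n)$ is a $312$ in $p$) and $p(3)<a$ (else $p(3)\,p(n)\,a$ is a $231$ in $p^{(3)}$). This forces $b=a+1$ and $\sigma=(a-1,\ldots,1,n,\ldots,a+2)$ with $2\le a\le n-2$, giving $n-3$ permutations.

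For $a=b+1<n$, the choice $\sigma=\pi$ gives $n-2$ permutations. If $\sigma\neq\pi$, then $p(3)<b$ (else $b\,p(3)\,n$ is a $123$ in $p^{(2)}$) and $p(n)>a$ (else $p(3)\,p(n)\,a$ is a $123$ in $p^{(2)}$). So $p$ is a rotation of $(n,n-1,\ldots,1)$ with $2\le b\le n-2$, giving $n-3$ more.

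Each surviving form must still be checked against all three patterns, as the paper's Cases 1--6 do. Only then does $1+(n-1)+\binom{n-1}{2}+(n-3)+(n-2)+(n-3)=\half(n+7)(n-2)$ follow.
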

\begin{proof}
    We will prove that $|S_n^{(3)}(312:123:231)|=\half (n+7)(n-2)$. Since \[(132:321:213)=((312)^c:(123)^c:(231)^c),\] by 
Theorem~\ref{Theorem:ComplementReverse}, we will also have $|S_n^{(3)}(132:321:213)|=\half (n+7)(n-2)$. 
    
    It is easy to see that $S_3^{(3)}(312:123:231)=S_3\backslash\{312\}$ and hence $|S_n^{(3)}(312:123:231)|=\half (n+7)(n-2)$ holds for $n=3$. Now we assume that $n\geq4$.

    Let $p\in S_n^{(3)}(312;123;231)$. Write $\sigma=p(3)p(4)\cdots p(n)$. Since $p=p(1)p(2)\sigma$, $p^{(2)}=p(2)\sigma p(1)$, and $p^{(3)}=\sigma p(1)p(2)$, $\sigma$ avoids $312$, $123$, and $231$. Let $\pi$ denote the decreasing permutation on $\{1,2,\ldots,n\}\backslash\{p(1),p(2)\}$. By Lemma~\ref{Lemma:PermutationsAvRotations3}, $\sigma$ is a rotation of $\pi$.  There are six 
disjoint cases depending on $p(1)$, $p(2)$, and $p(3)$.

    {\bf Case 1}: $p(1)=n$. In this case we have $p(2)=n-1$, because otherwise $(p(1),p(2),n-1)$ would be a $312$ pattern in $p$. 
At the same time $\sigma=\pi$, because otherwise $\sigma$ has an increasing subpermutation $xy$ and then $nxy$ would be a $312$ pattern in $p$. Hence there is a unique permutation $p=(n,n-1,\ldots,1)$.

    {\bf Case 2}: $p(2)=n$. In this case, $\sigma=\pi$ because otherwise, similar to Case 1, $p$ would have a $312$ pattern in $p$. Hence, $p$ must be of the form $(n-1,n,n-2,n-3,\ldots,1)$ or \[\alpha_a:=(a,n,n-1,\ldots,a+1,a-1,a-2,\ldots,1),\] with $a\in\{1,2,\ldots,n-2\}$. Similar to Case 3 in the proof of Theorem~\ref{Theorem:3ChainLinear}, we can see that $\alpha_a\in S_n^{(3)}(312:123:231)$ for all $a\in\{1,2,\ldots,n-2\}$ by checking the cases. Hence, there are $n-1$ possibilities for $p$.

    {\bf Case 3}: $p(1)>p(2)$ and $p(3)=n$. In this case, we must have $p(1)=p(2)+1$ because otherwise $(p(1),p(2),p(2)+1)$ would be a $312$ pattern in $p$. Since $p(3)=n$ and $\sigma$ is a rotation of $\pi$, we must have $\sigma=\pi$. Hence, $p$ must be of the form
    \[
    \beta_a:=(a+1,a,n,n-1,\ldots,a+2,a-1,a-2,\ldots,1),
    \]
    with $a\in\{1,2,\ldots,n-2\}$. Again, as in Case 3 in the proof of Theorem~\ref{Theorem:3ChainLinear}, we see that $\beta_a\in S_n^{(3)}(312:123:231)$ for all $a\in\{1,2,\ldots,n-2\}$ by checking the cases. 
Hence, there are $n-2$ possibilities for $p$.

    {\bf Case 4}: $n>p(1)>p(2)$ and $p(3)\neq n$. Similar to Case 3, we must have $p(1)=p(2)+1$ because otherwise $(p(1),p(2),p(2)+1)$ would be a $312$ pattern in $p$. If $p(2)=1$, then $p=(2,1,n,n-1,\ldots,3)$ because otherwise $p^{(2)}$ would have a $123$ pattern. Now suppose $p(2)\neq 1$. Then we must have $p(3)<p(2)$ because otherwise $p(2)p(3)n$ would be a $123$ pattern in $p^{(2)}$. Also note that if $p(1)\neq n-1$, then we have $p(n)>p(1)$. This is because if $p(n)<p(1)$, then, since $\sigma$ is a rotation of $\pi$, we have $p(3)<p(n)$ and hence $p(3)p(n)p(1)$ is $123$ pattern in $p^{(2)}$, which is a contradiction. 
Hence
    \[
    p=(a+1,a,a-1,\ldots,1,n,n-1,\ldots,a+2),
    \]
    with $a\in\{1,2,\ldots,n-3\}$. Since $p$ is a rotation of $(n,n-1,\ldots,1)$, by Lemma~\ref{Lemma:PermutationsAvRotations3}, we have $p\in S_n^{(3)}(312:123:231)$ and hence there are $n-3$ choices for $p$ in this case.

    {\bf Case 5}: $p(1)<p(2)$ and $p(3)=n$. Since $\sigma$ is a rotation of $\pi$, we have $\sigma=\pi$. It follows that $p$ must be of the form
    \[
    \gamma_{ij}=(i,j,n,n-1,\ldots,j+1,j-1,j-2,\ldots,i+1,i-1,i-2,\ldots,1),
    \]
    where $i\in\{1,2,\ldots,n-2\}$ and $j\in\{i+1,\ldots,n-1\}$.
    The number of such permutations is
    \[
    \sum_{i=1}^{n-2}\sum_{j=i+1}^{n-1}1=\sum_{i=1}^{n-2}(n-1-i)=\half (n-1)(n-2).
    \]
    We still need to show that all these permutations belong to $S_n^{(3)}(312:123:231)$. Let $i\in\{1,2,\ldots,n-2\}$ and $j\in\{i+1,i+2,\ldots,n-1\}$. Notice that $\gamma_{ij}=ij\pi$. By Lemma~\ref{Lemma:PermutationsAvRotations3}, $\pi$ avoids the patterns $312$, $123$, and $231$. It remains to show that if $abc$ is a subpermutation of $ij\pi$ containing $i$ or $j$, then $abc$ is not a $312$ pattern; and similar for $\gamma_{ij}^{(2)}=j\pi i$ and $\gamma_{ij}^{(3)}=\pi ij$. Let $x,y\in\{1,2,\ldots,n\}\backslash\{i,j\}$ with $x>y$. Since $\pi$ is decreasing, $xy$ is a subpermutation of $\pi$. Since $i<j$, neither $ijx$ nor $ijy$ is a $312$ pattern in $\gamma_{ij}$. Since $x>y$, neither $ixy$ nor $jxy$ is a $312$ pattern in $\gamma_{ij}$. Since $i<j$ and $x>y$, in $\gamma_{ij}^{(2)}$, none of $jxi$, $jyi$, $xyi$ and $jxy$ is a $123$ pattern. Similarly, in $\gamma_{ij}^{(3)}$, none of $xyi$, $xyj$, $xij$, and $yij$ is a $231$ pattern. Hence $\gamma_{ij}\in S_n^{(3)}(312;123;231)$ for all $i\in\{1,2,\ldots,n-2\}$ and $j\in\{i+1,i+2,\ldots,n-1\}$. 
It follows that there are $\half (n-1)(n-2)$ possibilities for $p$.
    
    {\bf Case 6}: $p(1)<p(2)<n$ and $p(3)\neq n$. Since $\sigma$ is a rotation of $\pi$, we have $p(n)>p(3)$. 
It follows that $p(n)>p(2)$ because otherwise $p(2)p(3)p(n)$ would be a $312$ pattern in $p$. We also have $p(3)<p(1)$ because otherwise $p(3)p(n)p(1)$ would be a $231$ pattern in $p^{(3)}$. Since $p(n)>p(2)$, $p(3)<p(1)$, $p(1)<p(2)$, and $\sigma$ is a rotation of $\pi$, we must have $p(2)=p(1)+1$. So $p$ must be of the form
    \[
    \delta_a:=(a,a+1,a-1,a-2,\ldots,1,n,n-1,\ldots,a+2),
    \]
    with $a\in\{2,3,\ldots,n-2\}$. Now we show that $\delta_a\in S_n^{(3)}(312:123:231)$ for all $a\in\{2,3,\ldots,n-2\}$. Let $a\in\{2,3,\ldots,n-2\}$ and $x,y\in \{1,2,\ldots,n\}\backslash\{a,a+1\}$ with $x\neq y$. In $\delta_a$, we see that $(a,a+1,x)$ is not a $312$ pattern. If $axy$ is a $312$ pattern, then $x<y<a$ and $axy$ is a subpermutation of $(a,a-1,\ldots,1)$, which is a contradiction. Similarly, $(a+1,x,y)$ is not a $312$ pattern either. This proves that $\delta_a$ avoids $312$. In $\delta_a^{(2)}$, we have that $(a+1,x,a)$ is not a $123$ pattern. If $(a+1,x,y)$ is a $123$ pattern, then $a+1<x<y$ and $xy$ is a subpermutation of $(n,n-1,\ldots,a+2)$,  a contradiction. Similarly, $xya$ is not a $123$ pattern either. This proves that $\delta_a^{(2)}$ avoids $123$. Finally, in $\delta_a^{(3)}$, we see that $(x,a,a+1)$ is not a $231$ pattern. If $xya$ is a $231$ pattern, then $y>x>a$ and $xy$ is a subpermutation of $(n,n-1,\ldots,a+2)$, which is a contradiction. Similary, $(x,y,a+1)$ is not a $231$ pattern in $\delta_a^{(3)}$ either. This completes the proof that $\delta_a\in S_n^{(3)}(312:123:231)$. Hence there are $n-3$ possibilities for $p$ in this case.

    Putting all the possibilities together, we have
    \[
    |S_n^{(3)}(312:123:231)|=1+(n-1)+(n-2)+(n-3)+\half (n-1)(n-2)+(n-3)=\half (n+7)(n-2).
    \]
    This completes the proof.
\end{proof}

\section{Avoiding $2$-Chains}\label{Section:2Chain}
In this section we enumerate $S_n^{(2)}(q_1:q_2)$ where $q_1,q_2\in S_3$, $q_1\neq q_2$, and $n\geq2$. Note that the cases when $q_1=q_2$ 
are special cases of Theorems~\ref{Theorem:MonotonePatterns} and \ref{Theorem:NonMonotonePatterns}. By Theorem~\ref{Theorem:ComplementReverse} we have \[|S_n^{(2)}(\sigma_1;\sigma_2)|=|S_n^{(2)}(\sigma_1^c:\sigma_2^c)|=|S_n^{(2)}(\sigma_2^r:\sigma_1^r)|=|S_n^{(2)}((\sigma_2^r)^c:(\sigma_1^r)^c)|.\] 
Hence, there are ten cases for $(q_1:q_2)$ with $q_1\neq q_2$:
\begin{itemize}
    \item $|S_n^{(2)}(123:132)|=|S_n^{(2)}(321:312)|=|S_n^{(2)}(231:321)|=|S_n^{(2)}(213:123)|$;
    \item $|S_n^{(2)}(123:213)|=|S_n^{(2)}(321:231)|=|S_n^{(2)}(312:321)|=|S_n^{(2)}(132:123)|$;
    \item $|S_n^{(2)}(123:231)|=|S_n^{(2)}(321:213)|=|S_n^{(2)}(132:321)|=|S_n^{(2)}(312:123)|$;
    \item $|S_n^{(2)}(123:312)|=|S_n^{(2)}(321:132)|=|S_n^{(2)}(213:321)|=|S_n^{(2)}(231:123)|$;
    \item $|S_n^{(2)}(123:321)|=|S_n^{(2)}(321:123)|$;
    \item $|S_n^{(2)}(132:213)|=|S_n^{(2)}(312:231)|$;
    \item $|S_n^{(2)}(132:231)|=|S_n^{(2)}(312:213)|$;
    \item $|S_n^{(2)}(132:312)|=|S_n^{(2)}(312:132)|=|S_n^{(2)}(213:231)|=|S_n^{(2)}(231:213)|$;
    \item $|S_n^{(2)}(213:132)|=|S_n^{(2)}(231:312)|$;
    \item $|S_n^{(2)}(213:312)|=|S_n^{(2)}(231:321)|$.
\end{itemize}

We will enumerate $S_n^{(2)}(q_1:q_2)$ for the first $2$-chain in each case. First of all, by the Erd\H{o}s-Szekeres theorem \cite[p. 467]{ErdosSzekeres1935}, we have $|S_n^{(2)}(123:321)|=0$ for all $n\geq6$. To enumerate the other nine cases, 
we use a result of Simion and Schmidt \cite{SimionSchmidt1985} on the number of permutations avoiding two different patterns of length three. 
Let
$S_n(\sigma_1,\sigma_2)$ denote the set of permutations $p\in S_n$ avoiding both $\sigma_1$ and $\sigma_2$.
\begin{theorem}{\rm\cite[Section 3]{SimionSchmidt1985}}\label{Theorem:Pairs3}
For all $n\geq1$,
    \[
   \begin{split}
|S_n(123,132)|=&|S_n(321,312)|=|S_n(123,213)|=|S_n(321,231)|=|S_n(132,213)|=|S_n(312,231)|\\=&|S_n(132,231)|=|S_n(312,213)|=|S_n(132,312)|=|S_n(213,231)|=2^{n-1},
   \end{split} 
    \]
    and
    \[
    |S_n(123,312)|=|S_n(321,132)|=|S_n(123,231)|=|S_n(321,213)|=\binom{n}{2}+1.
    \]
\end{theorem}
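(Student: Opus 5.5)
The plan is to prove each count by a recursion on the position of the largest entry $n$. First, the symmetry classes cut down the number of cases. Complement and reverse of a single permutation (the standard observation recalled before Lemma~\ref{Lemma:DirectionInverse}), together with inverse (which fixes $123$ and $321$ and swaps $231\leftrightarrow312$), show that it suffices to treat five representative pairs: $(123,132)$, $(132,213)$, $(132,231)$, $(132,312)$, and $(123,231)$. The first four will give $2^{n-1}$ and the last $\binom n2+1$. Pairs such as $(123,213)$ and $(321,312)$ fall into the class of $(123,132)$ under these maps.

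\textbf{The pair $(123,231)$.} Write $p=\alpha\,n\,\beta$.
\begin{itemize}
\item Avoiding $231$ forces every entry of $\alpha$ to be smaller than every entry of $\beta$.
\item Avoiding $123$ forces $\alpha$ to be decreasing, since $a<b$ in $\alpha$ would give the pattern $abn$.
\item If $\alpha\neq\emptyset$, then $\beta$ must also be decreasing. Otherwise an entry $a$ of $\alpha$ and an increasing pair $x<y$ in $\beta$ give $a<x<y$, a $123$ pattern.
\end{itemize}
Thus for each $k=|\alpha|\in\{1,\dots,n-1\}$ there is exactly one permutation, namely $(k,k-1,\dots,1,n,n-1,\dots,k+1)$. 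I will check directly that it avoids both patterns, since it is two decreasing runs with the lower run first. For $k=0$ we get $p=n\beta$ with $\beta\in S_{n-1}(123,231)$ arbitrary. Writing $b_n=|S_n(123,231)|$, this gives $b_n=b_{n-1}+(n-1)$ with $b_1=1$, hence $b_n=\binom n2+1$.

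\textbf{The pair $(123,132)$.} Again write $p=\alpha\,n\,\beta$.
\begin{itemize}
\item Avoiding $132$ forces all entries of $\alpha$ to exceed all entries of $\beta$.
\item Avoiding $123$ forces $\alpha$ to be decreasing.
\item $\beta$ must avoid both patterns.
\end{itemize}
Conversely, every such concatenation avoids both patterns. The only possible mixed occurrences use an entry of $\alpha$ or $n$ as the leftmost letter with larger letters to its right. Letters of $\alpha\cup\{n\}$ lie above everything in $\beta$, so this forces letters of $\alpha$ to appear in increasing order, which is excluded. Hence $a_n=\sum_{k=0}^{n-1}a_{n-1-k}$ with $a_0=1$, which gives $a_n=2^{n-1}$.

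\textbf{The remaining pairs.} I will run the same decomposition for $(132,213)$, $(132,231)$ and $(132,312)$, using the value $n$ or, where more convenient, the first entry $p(1)$. In each case one of the two patterns separates the permutation into blocks, the other forces one block to be monotone or to have a forced value, and the result is a recursion $a_n=2a_{n-1}$. For example, in a $(132,231)$-avoider the entry $n$ must be at either end, giving the doubling directly. For $(132,312)$ the natural recursion is on $p(1)$: it must be $1$ or $n$, giving $a_n=2a_{n-1}$.

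The main obstacle is the converse direction in each case: checking that every permutation with the derived structure really avoids both patterns, since mixed occurrences straddling $n$ must be ruled out. I will handle this uniformly by classifying a putative occurrence according to how many of its letters lie in each block, exactly as in the proofs of Theorems~\ref{Theorem:MonotonePatterns} and~\ref{Theorem:NonMonotonePatterns}.
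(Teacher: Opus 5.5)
The paper gives no proof of this theorem; it quotes Simion and Schmidt. So your recursive argument is a self-contained substitute, not a variant of anything in the text.

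Your symmetry reduction is sound. Your analyses of $(123,231)$, $(123,132)$ and $(132,231)$ are correct. The $(132,213)$ case also goes through by splitting $p=\alpha\,n\,\beta$: avoiding $132$ puts all of $\alpha$ above all of $\beta$, and avoiding $213$ forces $\alpha$ to be increasing. This gives the same recursion as for $(123,132)$.

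There is one concrete false step. You claim that a $(132,312)$-avoider must have $p(1)\in\{1,n\}$, and this fails for every $n\geq 3$. The permutations $213$, $231$ and $2134$ avoid both $132$ and $312$, yet each begins with $2$. The reason is that an interior first value $\ell$, together with entries $x<\ell<y$ after it, only produces a $213$ or $231$ pattern, and neither is forbidden.

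The entry that is actually forced is the last one, because both $132$ and $312$ end with their middle value. If $p(n)=\ell$ with $1<\ell<n$, choose $x<\ell<y$. Then either $xy\ell$ is a $132$ pattern or $yx\ell$ is a $312$ pattern, so $p(n)\in\{1,n\}$. An extreme last entry cannot occur in any occurrence of either pattern. Hence $p$ is counted exactly when $p(1)\cdots p(n-1)$ avoids both, which gives $a_n=2a_{n-1}$.

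Your first-entry argument is the correct one for the reversed pair $(213,231)$, which is also in the statement. Alternatively, you already allow inversion. Since $132^{-1}=132$ and $231^{-1}=312$, $S_n(132,312)$ is the image of $S_n(132,231)$ under inversion, so this case needs no separate treatment at all. Your five representatives are exactly the classes under complement and reverse alone; inversion merges $\{132,231\}$ and $\{132,312\}$, leaving four.
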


We start with the five cases where $|S_n^{(2)}(q_1:q_2)|=|S_n(q_1,q_2)|$.

\begin{proposition}\label{Prop:123;132}
For all $n\geq2$, we have $|S_n^{(2)}(123:132)|=|S_n^{(2)}(132:312)|=2^{n-1}$.
\end{proposition}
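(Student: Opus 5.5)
Both equalities can be proved the same way as the $k=2$ parts of Theorems~\ref{Theorem:MonotonePatterns} and \ref{Theorem:NonMonotonePatterns}. Fix the leading term $\ell=p(1)$ and write $p^{(2)}=\tau\ell$ with $\tau=p(2)p(3)\cdots p(n)$. Call an entry \emph{large} if it exceeds $\ell$ and \emph{small} if it is less than $\ell$. The plan is to show that for each $\ell$ the permutations in question are exactly those of a rigid shape: the large entries form one monotone block, and the small entries form a block $\sigma\in S_{\{1,\ldots,\ell-1\}}$ avoiding the corresponding pair of patterns. Theorem~\ref{Theorem:Pairs3} then gives $2^{\ell-2}$ choices for $\sigma$ when $\ell\ge2$, and $1$ choice when $\ell=1$. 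Summing over $\ell$ yields
\[
1+\sum_{\ell=2}^{n}2^{\ell-2}=2^{n-1}.
\]

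\emph{The chain $(123:132)$.} First, no small $x$ can precede a large $z$ in $\tau$, since then $xz\ell$ would be a $132$ pattern in $p^{(2)}$. So all large entries come before all small ones. Second, the large entries must appear in decreasing order, since otherwise $\ell$ followed by an increasing pair of large entries is a $123$ pattern in $p$. Hence
\[
p=(\ell,n,n-1,\ldots,\ell+1,\sigma),
\]
and $\sigma$ must avoid $123$ (because $\sigma$ is a factor of $p$) and $132$ (because $\sigma$ is a factor of $p^{(2)}$). For the converse, I will check that every occurrence of $123$ in $p$ or of $132$ in $p^{(2)}$ would have to lie entirely in $\sigma$. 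The mixed triples are of three kinds: a triple using $\ell$ at the front of $p$, a triple using one or two large entries placed before $\sigma$, and a triple using $\ell$ at the end of $p^{(2)}$. In each kind the relative order forces a pattern starting or ending with its maximum, or a decreasing pair among the large entries, so none of them is a $123$ in $p$ or a $132$ in $p^{(2)}$.

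\emph{The chain $(132:312)$.} The argument is the mirror image. No large $z$ may precede a small $x$ in $\tau$, since then $zx\ell$ is a $312$ pattern in $p^{(2)}$. The large entries must be increasing, since otherwise $\ell zy$ with $\ell<y<z$ is a $132$ pattern in $p$. Hence
\[
p=(\ell,\sigma,\ell+1,\ell+2,\ldots,n),
\]
where $\sigma$ avoids both $132$ and $312$. The converse is again a check of mixed triples. In $p$, a triple beginning with $\ell$ followed by small entries starts with its maximum, and a triple ending in large entries ends with its maximum. In $p^{(2)}$, the triples $xz\ell$, $zz'\ell$ and $xzz'$ (with $x$ small and $z,z'$ large) form the patterns $132$, $231$ and $123$ respectively, so none of them is $312$.

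I expect the main obstacle to be these converse verifications, that is, exhausting the mixed triples cleanly without missing a case. The forward structural deductions are short. I would organize the check by how many of the three entries lie in $\sigma$, exactly as in the proofs of Theorems~\ref{Theorem:MonotonePatterns} and \ref{Theorem:NonMonotonePatterns}.
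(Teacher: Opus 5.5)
Your proposal is correct and follows essentially the paper's proof: you fix $\ell=p(1)$, use the pattern ending at $\ell$ in $p^{(2)}$ to put all large entries on one side of all small entries, use the pattern starting at $\ell$ in $p$ to make the large block monotone, count $\sigma\in S_{\ell-1}(123,132)$ (resp.\ $S_{\ell-1}(132,312)$) via Theorem~\ref{Theorem:Pairs3}, and then check the converse; your explicit shape $p=(\ell,\sigma,\ell+1,\ldots,n)$ correctly fills in the case the paper only calls ``similar.'' When you write up the converse, it is cleaner to argue as the paper does, by cases on the value of the first entry of a hypothetical occurrence, because your one-line criteria are slightly loose: they do not literally cover $\ell z x$ in $p$ (a $231$), nor $xx'z$ and $xx'\ell$ in $\sigma(\ell+1)\cdots n\,\ell$, although all three are trivially harmless.
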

\begin{proof}
We prove that $|S_n^{(2)}(123:132)|=2^{n-1}$. The proof of $|S_n^{(2)}(132:312)|=2^{n-1}$ is similar.

Let $p\in S_n^{(2)}(123:132)$. Write $p(1)=\ell$. Since $p$ avoids $123$, the subpermutation on $\{\ell+1,\ell+2,\ldots,n\}$ is decreasing. Since $p^{(2)}$ avoids $132$, if $x>\ell$ and $y<\ell$, then $x$ comes before $y$. It follows that $p=\ell\alpha\beta$ where $\alpha$ is the decreasing permutation on $\{\ell+1,\ell+2,\ldots,n\}$ and $\beta\in S_{\ell-1}(123,132)$ if $\ell>1$. By Theorem~\ref{Theorem:Pairs3}, the number of permutations of the form $\ell\alpha\beta$ is
    \[
    1+\sum_{\ell=2}^n2^{(\ell-1)-1}=2^{n-1}.
    \]

It remains to show that every permutation of the form $\ell\alpha\beta$ as described in the previous paragraph belongs to $S_n^{(2)}(123:132)$. We first show that $\ell\alpha\beta$ avoids $123$. Suppose, by way of contradiction, that $abc$ is a $123$ pattern in $\ell\alpha\beta$. If $a=\ell$, then $bc$ is a subpermtuation of $\alpha$, which is impossible because $\alpha$ is decreasing. If $a>\ell$, then $abc$ is a subpermutation of $\alpha$, which again is impossible. Finally, if $a<\ell$, then $abc$ is a subpermutation of $\beta$ which contradicts the fact that $\beta$ avoids $123$. This proves that $\ell\alpha\beta$ avoids $123$. We still need to show that $\alpha\beta\ell$ avoids $132$. Suppose, by way of contradiction, that $xyz$ is a $132$ pattern in $\alpha\beta\ell$. If $x>\ell$, then $xyz$ is a subpermutation of $\alpha$, which contradicts the fact that $\alpha$ is decreasing. If $x<\ell$, then $y>z=\ell$ because $\beta$ avoids $132$; but this would mean that $\beta$ contains a term greater than $\ell$, which is impossible. Hence $\ell\alpha\beta\in S_n^{(2)}(123:132)$.
\end{proof}

\begin{remark}
By Theorem~\ref{Theorem:Pairs3} and Proposition~\ref{Prop:123;132}, $|S_n^{(2)}(123:132)|=|S_n(123,132)|$. However, $S_n^{(2)}(123:132)\neq S_n(123,132)$ in general. For example, $35421\in S_5^{(2)}(123:132)$ but $35421\notin S_n(123,132)$.
\end{remark}

\begin{proposition}
For all $n\geq2$, $|S_n^{(2)}(132:231)|=|S_n^{(2)}(213:312)|=2^{n-1}$.
\end{proposition}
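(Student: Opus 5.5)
The plan is to determine the possible values of the leading term $\ell=p(1)$ and then reduce each case to the Simion--Schmidt count in Theorem~\ref{Theorem:Pairs3}. The key observation is that $\ell$ sits at the front of $p$ and at the back of $p^{(2)}$. So the two avoidance conditions, applied to patterns that use $\ell$, pin $\ell$ down almost completely.

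\textbf{The chain $(132:231)$.} Let $p\in S_n^{(2)}(132:231)$ with $p(1)=\ell$.
\begin{itemize}
\item Since $p$ avoids $132$, the terms larger than $\ell$ must appear in increasing order in $p$. Otherwise $\ell$, a larger $y$, then a smaller $z>\ell$ would form a $132$ pattern.
\item Since $p^{(2)}=p(2)\cdots p(n)\,\ell$ avoids $231$, the terms larger than $\ell$ must appear in decreasing order. Otherwise $\ell$ would play the role of the ``1'' in a $231$ pattern.
\item Together these force at most one term to exceed $\ell$, so $\ell\in\{n-1,n\}$.
\end{itemize}
Now write $p=\ell\beta$.
\begin{itemize}
\item If $\ell=n$, then $n$ at the front cannot take part in a $132$ pattern, and $n$ at the end of $\beta n$ cannot take part in a $231$ pattern. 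Hence $p$ is admissible if and only if $\beta\in S_{n-1}(132,231)$.
\item If $\ell=n-1$, the only term larger than $\ell$ is $n$. Then $n-1$ can be neither the ``1'' of a $132$ pattern in $p$ nor the ``1'' of a $231$ pattern in $\beta\,(n-1)$, since each would need two larger terms. The other roles are also impossible: in $p$, $n-1$ comes first and is not the smallest of the three, and in $\beta(n-1)$ it comes last and is not the smallest. Hence $p$ is admissible if and only if $\beta$, a permutation of $\{1,\ldots,n-2,n\}$, avoids both $132$ and $231$.
\end{itemize}
By Theorem~\ref{Theorem:Pairs3}, each case contributes $2^{n-2}$, so the total is $2^{n-1}$.

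\textbf{The chain $(213:312)$.} Let $p\in S_n^{(2)}(213:312)$ with $p(1)=\ell$.
\begin{itemize}
\item Since $p$ avoids $213$, no term $y<\ell$ may precede a term $x>\ell$, as $\ell y x$ would be a $213$ pattern.
\item Since $p^{(2)}$ avoids $312$, no term $x>\ell$ may precede a term $y<\ell$, as $x y \ell$ would be a $312$ pattern.
\item Hence terms smaller and larger than $\ell$ cannot both exist, so $\ell\in\{1,n\}$.
\end{itemize}
In either case $\ell$ cannot participate in a forbidden pattern. In $p$ it stands first and is extreme, so it cannot be the ``2'' of a $213$ pattern. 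In $p^{(2)}$ it stands last and is extreme, so it cannot be the ``2'' of a $312$ pattern. Therefore $\beta=p(2)\cdots p(n)$ ranges exactly over permutations avoiding $\{213,312\}$, giving $2\cdot 2^{n-2}=2^{n-1}$ by Theorem~\ref{Theorem:Pairs3}.

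\textbf{Expected obstacle.} The only delicate step is confirming that the leading term imposes no extra restriction in each case. This requires checking every role $\ell$ could play in a pattern, not just the one used to restrict $\ell$. The case $\ell=n-1$ is the one to verify most carefully, because $n$ still lies in $\beta$. The base case $n=2$ is immediate, since both permutations of length two qualify.
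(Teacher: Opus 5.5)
Your proposal is correct and follows the paper's proof. You restrict $p(1)$ to $\{n-1,n\}$ (respectively $\{1,n\}$) by playing the avoidance condition on $p$ against the one on $p^{(2)}$; the paper's triple $(\ell,n,n-1)$ versus $(n-1,n,\ell)$ is the same idea, and you then check that the leading term lies in no forbidden pattern and apply Theorem~\ref{Theorem:Pairs3} to the remaining $n-1$ entries, with your $(213:312)$ paragraph simply writing out what the paper leaves as ``similar.''
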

\begin{proof}
We first prove that $|S_n^{(2)}(132:231)|=2^{n-1}$. Since $S_2^{(2)}(132:231)=S_2$, the statement holds for $n=2$. So we assume that $n\geq3$.

    Let $p\in S_n^{(2)}(132:231)$. Write $p(1)=\ell$. We first notice that $\ell>n-2$. This is because if $\ell\leq n-2$, then either $(\ell,n,n-1)$ is a $132$ pattern in $p$ or $(n-1,n,\ell)$ is a $231$ pattern in $p^{(2)}$. Hence, $p$ must be of the form $\ell\alpha$ where $\ell\in\{n-1,n\}$ and $\alpha$ is a permutation on $\{1,2,\ldots,n\}\backslash\{\ell\}$ avoiding $132$ and $231$. By Theorem~\ref{Theorem:Pairs3}, there are $2^{(n-1)-1}+2^{(n-1)-1}=2^{n-1}$ such permutations.

    It remains to show that all permutations of the form $\ell\alpha$ as described in the previous paragraph belong to $S_n^{(2)}(132:231)$. 
Suppose, by way of contradiction, that $abc$ is a $132$ pattern in $\ell\alpha$. Since $\ell=n-1$ or $n$, we have $a\neq \ell$. It follows that $abc$ is a $132$ pattern in $\alpha$, which is a contradiction. Similarly, $\alpha\ell$ avoids $231$. Hence $\ell\alpha\in S_n^{(2)}(132:231)$. Therefore, $|S_n^{(2)}(132:231)|=2^{n-1}$.

    The proof of $|S_n^{(2)}(213:312)|=2^{n-1}$ is similar. The difference is that if $p\in S_n^{(2)}(213:312)$, then $p(1)\in\{1,n\}$.
\end{proof}

\begin{proposition}
For all $n\geq2$, $|S_n^{(2)}(123:312)|=\binom{n}{2}+1$.
\end{proposition}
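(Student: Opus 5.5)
The plan is to follow the same template as Proposition~\ref{Prop:123;132}: fix the leading term $\ell=p(1)$, determine the forced structure of $p$, count with Theorem~\ref{Theorem:Pairs3}, and then check that every permutation of that form really belongs to $S_n^{(2)}(123:312)$.

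Let $p\in S_n^{(2)}(123:312)$ with $p(1)=\ell$. The structure is forced in three steps.

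\emph{Large entries.} Since $p$ avoids $123$ and $\ell$ comes first, the subpermutation of $p$ on $\{\ell+1,\ldots,n\}$ must be decreasing. Call it $\alpha$.

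\emph{Small entries before large ones.} Suppose $x>\ell>y$ and $x$ comes before $y$ in $p$. Then $x\,y\,\ell$ is a $312$ pattern in $p^{(2)}=p(2)\cdots p(n)\,\ell$. Hence every entry smaller than $\ell$ precedes every entry larger than $\ell$, and $p=\ell\beta\alpha$ with $\beta$ a permutation of $\{1,\ldots,\ell-1\}$.

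\emph{Constraints on $\beta$.}
\begin{itemize}
\item If $\ell<n$, then $\alpha$ is nonempty. Any increasing pair in $\beta$, followed by $n$, would give a $123$ pattern in $p$. So $\beta$ must be decreasing, and there is exactly one choice.
\item If $\ell=n$, then $\alpha$ is empty. Here $\beta$ must avoid $123$ (from $p$) and $312$ (from $p^{(2)}=\beta n$). So $\beta\in S_{n-1}(123,312)$, and Theorem~\ref{Theorem:Pairs3} gives $\binom{n-1}{2}+1$ choices.
\end{itemize}

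Summing over $\ell$ gives
\[
(n-1)+\binom{n-1}{2}+1=\binom{n}{2}+1.
\]

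It remains to check sufficiency, which I expect to be the only place needing some care: every permutation of the form above must lie in $S_n^{(2)}(123:312)$.

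\emph{$p=\ell\beta\alpha$ avoids $123$.} Let $abc$ be a $123$ pattern. If $a=\ell$, then $b,c$ both lie in the decreasing block $\alpha$, which is impossible. If $a$ lies in $\beta$, consider two subcases.
\begin{itemize}
\item If $\ell<n$, then $\beta$ is decreasing, so $b$ must lie in $\alpha$. Then $c$ also lies in $\alpha$ after $b$, contradicting that $\alpha$ is decreasing.
\item If $\ell=n$, then $abc$ lies inside $\beta$, which avoids $123$.
\end{itemize}
If $a$ lies in $\alpha$, then $abc$ lies inside the decreasing block $\alpha$, which is impossible.

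\emph{$p^{(2)}=\beta\alpha\ell$ avoids $312$.} Let $cab$ be a $312$ pattern, with $c>b>a$. Since $\alpha$ is decreasing, $a$ and $b$ are not both in $\alpha$. The block $\beta$ lies entirely below $\alpha\cup\{\ell\}$. So if $c$ lies in $\beta$, then $a$ and $b$ both lie in $\beta$. This contradicts that $\beta$ avoids $312$; when $\ell<n$ it holds because $\beta$ is decreasing. If $c$ lies in $\alpha$, then $a$ and $b$ come from the rest of $\alpha$ together with $\ell$. Both of these are smaller than $c$ and appear in decreasing order, so they cannot form the required increasing pair $a<b$.

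This completes the plan.
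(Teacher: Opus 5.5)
Your proposal is correct and follows essentially the same route as the paper. You fix $\ell=p(1)$ and use the $312$ condition on $p^{(2)}$ to force every entry below $\ell$ before every entry above $\ell$. This gives the unique permutation $(\ell,\ell-1,\ldots,1,n,n-1,\ldots,\ell+1)$ for each $\ell<n$, and $n\beta$ with $\beta\in S_{n-1}(123,312)$ for $\ell=n$, which you count with Theorem~\ref{Theorem:Pairs3}. The differences are minor. You check sufficiency directly for all forms at once, whereas the paper uses Lemma~\ref{Lemma:PermutationsAvRotations3} for the $\ell<n$ forms. Your order of deductions also makes explicit why the entries below $\ell$ must be decreasing, a point the paper asserts before establishing the small-before-large property.
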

\begin{proof}
    Let $p\in S_n^{(2)}(123:312)$. Write $p(1)=\ell$. 
    
    First suppose $p(1)=\ell<n$. Since $p$ avoids $123$, the subpermutation on $\{\ell+1,\ell+2,\ldots,n\}$ is decreasing and the subpermutation on $\{1,2,\ldots,\ell-1\}$ is decreasing. Since $p^{(2)}$ avoids $312$, for all $x<\ell$ and $y>\ell$, $x$ comes before $y$ in $p$. So $p$ is of the form $(\ell,\ell-1,\ldots,1,n,n-1,\ldots,\ell+1)$ with $\ell\in\{1,2,\ldots,n-1\}$. Now let $\ell=n$. Then $p$ is of the form $n\alpha$ where $\alpha\in S_{n-1}(123,312)$. By Theorem~\ref{Theorem:Pairs3}, the total number of permutations of the forms $(\ell,\ell-1,\ldots,1,n,n-1,\ldots,\ell+1)$ with $\ell\in\{1,2,\ldots,n-1\}$ and $n\alpha$ with $\alpha\in S_{n-1}(123,312)$ is
    \[
    (n-1)+\left[\binom{n-1}{2}+1\right]=\binom{n}{2}+1.
    \]

    It remains to show that both types of permutations described above belong to $S_n^{(2)}(123:312)$. Let $\beta_\ell=(\ell,\ell-1,\ldots,1,n,n-1,\ldots,\ell+1)$ with $\ell\in\{1,2,\ldots,n-1\}$. Notice that $\beta_\ell$ is a rotation of $(n,n-1,\ldots,1)$. By Lemma~\ref{Lemma:PermutationsAvRotations3}, $\beta_\ell$ avoids $123$ and $312$. 
Suppose, by way of contradiction, that $abc$ is a $312$ pattern of $\beta_\ell^{(2)}$. Then we have $c=\ell$. It follows that $a>\ell>b$ and $a$ comes before $b$ in $\beta_{\ell}$, which is a contradiction. Hence $\beta_\ell\in S_n^{(2)}(123:312)$. Now let $\alpha\in S_{n-1}(123,312)$. Suppose, by way of contradiction, that $xyz$ is a $123$ pattern in $n\alpha$. Then $x\neq n$ and hence $xyz$ is a $123$ in $\alpha$, which is a contradiction. Similarly, $\alpha n$ does not contain the pattern $312$ either. Hence, $n\alpha\in S_n^{(2)}(123:312)$.
\end{proof}

The remaining four cases all have the property that $|S_n^{(2)}(q_1:q_2)|\neq|S_n(q_1,q_2)|$. The last two cases are especially interesting because $q_2=q_1^{(2)}$ for these two cases and they provide alternative 
interpretations to two known integer sequences.
\begin{proposition}\label{Prop:123;213}
For all $n\geq2$, $|S_n^{(2)}(123:213)|=2n-2$.
\end{proposition}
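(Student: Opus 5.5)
The plan is to fix the leading term $\ell=p(1)$, as in the proofs of Theorems~\ref{Theorem:MonotonePatterns} and \ref{Theorem:NonMonotonePatterns}. From the two avoidance conditions I will derive rigid structural constraints on $p$, show that they force $\ell\le 3$, and then count the permutations for each of $\ell=1,2,3$ directly.

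\textbf{Step 1: structural constraints.} Let $p\in S_n^{(2)}(123:213)$ with $p(1)=\ell$, and write $p^{(2)}=\sigma\ell$ with $\sigma=p(2)\cdots p(n)$.
\begin{itemize}
\item Since $p$ avoids $123$ and begins with $\ell$, the subpermutation of $p$ on $\{\ell+1,\ldots,n\}$ is $(n,n-1,\ldots,\ell+1)$.
\item Since $p^{(2)}$ ends with $\ell$ and avoids $213$, there are no $x>y$ with $x<\ell$ and $x$ before $y$. Otherwise $xy\ell$ would be a $213$ pattern. So the subpermutation on $\{1,\ldots,\ell-1\}$ is $(1,2,\ldots,\ell-1)$.
\item Now $p$ avoids $123$ and the small values appear in increasing order. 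Hence $\ell-1\le 2$, i.e.\ $\ell\le 3$.
\item When $\ell=3$, no value larger than $3$ can come after $2$, since otherwise $(1,2,c)$ is a $123$ pattern. So $2$ must be the last entry of $p$.
\end{itemize}

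\textbf{Step 2: counting candidates.} This leaves the following candidates.
\begin{itemize}
\item For $\ell=1$, only $p=(1,n,n-1,\ldots,2)$.
\item For $\ell=2$, $p=2\tau$, where $\tau$ is $(n,n-1,\ldots,3)$ with $1$ inserted into any of $n-1$ slots. This gives $n-1$ candidates.
\item For $\ell=3$ (which needs $n\ge3$), $p=3\tau2$, where $\tau$ is $(n,\ldots,4)$ with $1$ inserted into any of $n-2$ slots. This gives $n-2$ candidates.
\end{itemize}

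\textbf{Step 3: sufficiency.} I then verify that every candidate lies in $S_n^{(2)}(123:213)$, using the same kind of case check as in Proposition~\ref{Prop:123;132}.
\begin{itemize}
\item For $123$ in $p$: any increasing pair must involve a small value followed by a large one. For $\ell=2$, only the single entry $1$ is small and nothing smaller precedes it. For $\ell=3$, the only small values are $1$ and $2$, and $2$ is last, so no $123$ pattern can be completed.
\item For $213$ in $\sigma\ell$, consider a pattern $xyz$ with $y<x<z$. If $z=\ell$, then $x,y<\ell$ appear in increasing order, which is impossible. If $z>\ell$, then $x<z$ appears before $z$, and since the large values are decreasing, $x$ must be small. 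Then $x=1$ leaves no room for $y<x$, and $x=2$ (when $\ell=3$) is last, so no $z$ follows it.
\end{itemize}

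\textbf{Step 4: total.} Summing gives $1+(n-1)+(n-2)=2n-2$ for $n\ge3$. For $n=2$ the count is $1+1=2=2n-2$, which I will check directly.

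The main obstacle is the bound $\ell\le3$, together with pinning down the $\ell=3$ structure: one has to notice that the increasing run of small values interacts with the $123$ condition both by itself (length at most $2$) and together with a later large value (forcing $2$ to be last). Everything else is routine case-checking.
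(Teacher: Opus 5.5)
Your proposal is correct and takes essentially the same route as the paper. Fix $\ell=p(1)$ and use the forced decreasing order on $\{\ell+1,\ldots,n\}$ and increasing order on $\{1,\ldots,\ell-1\}$ to get $\ell\le 3$, with $p(n)=2$ when $\ell=3$; then count $1+(n-1)+(n-2)$. The remaining issues are cosmetic: your $213$ check omits the case $z<\ell$ (it is handled exactly like $z=\ell$), and the phrase ``appear in increasing order, which is impossible'' should say that $x$ preceding $y$ with $x>y$ contradicts the forced increasing order of the small values.
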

\begin{proof}
We first notice that $S_2^{(2)}(123:213)=S_2$ and
\[
S_3^{(2)}(123:213)=\{231,312,213,132\}.
\]
Hence the statement is true for $n=2$ and $n=3$. Now we assume that $n\geq4$.

    Let $p\in S_n^{(2)}(123:213)$. Write $p(1)=\ell$. We first show that $\ell\leq 3$. Suppose, by way of contradiction, that $\ell\geq4$. Since $p^{(2)}$ avoids $213$ and $p^{(2)}(n)=\ell$, the subpermutation on $\{1,2,3\}$ must be $123$. Then $p$ contains the pattern $123$, which is a contradiction. There are three cases left depending on the value of $\ell$.

    \textbf{Case 1}: $\ell=1$. Since $p$ avoids $123$, the subpermutation on $\{2,3,\ldots,n\}$ is $(n,n-1,\ldots,2)$. Since $(1,n,n-1,\ldots,2)$ avoids $123$ and $(n,n-1,\ldots,1)$ avoids $213$, we have $p=(1,n,n-1,\ldots,2)$.

    \textbf{Case 2}: $\ell=2$. Since $p$ avoids $123$, the subpermutation on $\{3,4,\ldots,n\}$ is $(n,n-1,\ldots,3)$. Now there are $n-1$ possibilities for the location of $1$. For each $i\in\{2,3,\ldots,n\}$, let $\alpha_i\in S_n$ be the permutation such that $\alpha_i(1)=2$, the subpermutation of $\alpha_i$ on $\{3,4,\ldots,n\}$ is $(n,n-1,\ldots,3)$, and $\alpha_i(i)=1$. We will show that $\alpha_i\in S_n^{(2)}(123:213)$ for all $i\in\{2,3,\ldots,n\}$. 
Suppose, by way of contradiction, that $abc$ is a $123$ pattern in $\alpha_i$. Since the subpermutation of $\alpha_i$ on $\{3,4,\ldots,n\}$ is $(n,n-1,\ldots,3)$, we have $a,b\in\{1,2\}$ and hence $a=1$ and $b=2$, which contradicts the fact that $2$ is the leading term. Now suppose, by way of contradiction, $xyz$ is a $213$ pattern in $\alpha_i^{(2)}$. Then $z\geq3$ and hence $x>2$, which again contradicts the fact that the subpermutation on $\{3,4,\ldots,n\}$ is $(n,n-1,\ldots,3)$. Therefore, there are exactly $n-1$ possibilities for $p$ in this case.
    
    \textbf{Case 3:} $\ell=3$. Since $p^{(2)}$ avoid $213$, the subpermutation on $\{1,2\}$ is $(1,2)$. Since $p$ avoids $123$, the subpermutation on $\{4,5,\ldots,n\}$ is decreasing. For all $x>3$, we must have that $x$ comes before $2$ because otherwise $12x$ would be a $123$ pattern. Hence, $p$ must satisfy $p(n)=2$ and the subpermutation of $p$ on $\{2,4,5,\ldots,n\}$ is $(n,n-1,\ldots,4,2)$. There are $n-2$ such permutations depending on the location of $1$. Similar to Case 2, 
all of these permutations belong to $S_n^{(2)}(123:213)$. Hence, there are $n-2$ such $p$ in this case.

    Adding up all the contributions, we have 
    \[
    |S_n^{(2)}(123:213)|=1+(n-1)+(n-2)=2n-2.
    \]
    This completes the proof.
\end{proof}

\begin{proposition}
    For all $n\geq2$, $|S_n^{(2)}(132:213)|=2n-2$.
\end{proposition}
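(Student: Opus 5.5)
The plan is to pin down the exact structure of every $p\in S_n^{(2)}(132:213)$ by conditioning on the leading term $\ell=p(1)$, in the style of the earlier proofs in this section. The elements of $\{1,\ldots,\ell-1\}$ are the \emph{small} elements and those of $\{\ell+1,\ldots,n\}$ the \emph{large} ones. Since $S_2^{(2)}(132:213)=S_2$, the case $n=2$ is immediate, and we assume $n\geq3$ (one can also check directly that $S_3^{(2)}(132:213)=\{123,213,231,312\}$).

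First I derive necessary conditions.
\begin{itemize}
\item Since $p^{(2)}=p(2)\cdots p(n)\ell$ avoids $213$ and ends in $\ell$, the small elements must appear in increasing order. Otherwise a descent $ab$ among them gives the $213$ pattern $ab\ell$.
\item Since $p$ avoids $132$ and begins with $\ell$, the large elements must appear in decreasing order. Otherwise an ascent $xy$ among them gives the $132$ pattern $\ell xy$.
\item Next, using $132$-avoidance of $p$ with a small element $x$ in front: if $x$ is followed by two large elements $y$ and then $z$, the decreasing order of the large elements gives $y>z$, so $xyz$ is a $132$ pattern. Likewise, if $x$ is followed by a large $y$ and then another small $z$, then $x<z<y$ and $xyz$ is again a $132$ pattern.
\item Hence after the first small element there is at most one large element, and if there is one it comes after all small elements. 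Because the large elements are decreasing, that large element must be $\ell+1$.
\end{itemize}

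So every $p$ has one of two forms: $p=(\ell,n,n-1,\ldots,\ell+1,1,2,\ldots,\ell-1)$, or $p=(\ell,n,n-1,\ldots,\ell+2,1,2,\ldots,\ell-1,\ell+1)$. The second form requires both a small and a large element, so it exists only when $2\leq\ell\leq n-1$. For $\ell=1$ and for $\ell=n$ the two forms coincide or the second does not exist, leaving exactly one permutation each. This gives the count $2+2(n-2)=2n-2$.

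The main step is then the converse: both forms really lie in $S_n^{(2)}(132:213)$. I would check this by the same case analysis on where the three entries of a putative pattern lie.
\begin{itemize}
\item For $132$ in $p$: the pattern cannot start with $\ell$, since the larger elements after it are decreasing. It cannot start with a large element, since everything after it that is larger is also large and hence decreasing. If it starts with a small element, everything after it is small and increasing, possibly followed by $\ell+1$, so the "3" of the pattern would have to be $\ell+1$ in final position, leaving no room for the "2".
\item For $213$ in $p^{(2)}$: the "3" cannot be $\ell$, since the small elements are increasing. If the "3" is large, the "2" and "1" precede it and satisfy "1" $<$ "2" $<$ "3". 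In the first form every large element precedes all small elements and the large elements are decreasing, so such a "2" must be a large element greater than the "3", which is impossible. In the second form the "3" could be $\ell+1$ at the end, but then the "2" and "1" would have to be small and in decreasing order, again impossible.
\end{itemize}

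This verification is routine. The part of the plan I expect to be most delicate is the necessity argument: making sure no third configuration survives, in particular ruling out large elements interleaved among the smalls, which is what the small–large–small exclusion in the third bullet handles.
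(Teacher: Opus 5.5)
Your second bullet has the monotonicity backwards, and the rest of the argument inherits the error. If $x,y>\ell$ with $x$ before $y$ and $x<y$, then $\ell xy$ is a $123$ pattern, which is harmless. It is a \emph{descent} $x>y$ among the large elements that makes $\ell xy$ a $132$ pattern, since $\ell<y<x$. So $132$-avoidance of $p$ forces the large elements to appear in \emph{increasing} order $(\ell+1,\ell+2,\ldots,n)$.

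As a result, your small--large--large exclusion, which needs $y>z$, is invalid. In the verification, the claim that a $132$ cannot start with $\ell$ ``since the larger elements after it are decreasing'' is exactly wrong. For every $\ell\leq n-2$, each of your forms contains $\ell$, $n$, $n-1$ in that order, which is a $132$ pattern. Your own $n=3$ check exposes this: your forms give $\{132,231,213,312\}$, while the true set is $\{123,213,231,312\}$. More generally, for $n\geq3$ the identity permutation lies in $S_n^{(2)}(132:213)$ but has neither of your forms. Your forms are correct only for $\ell\in\{n-1,n\}$, where there is at most one large element, so the count $2+2(n-2)$ matches $2n-2$ only by coincidence.

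The repair, which is the paper's argument, runs as follows. The small elements are increasing, as your first bullet correctly shows, and the large elements are increasing. A large element between two small ones gives a $132$ in $p$; your small--large--small argument still works. A small element $u$ between two large ones $x<y$ gives $xuy$, a $213$ pattern inside $p(2)\cdots p(n)$ and hence inside $p^{(2)}$. So the small and large elements form two contiguous blocks, and $p=\ell\alpha\beta$ with $\{\alpha,\beta\}=\{(\ell+1,\ldots,n),(1,\ldots,\ell-1)\}$. This gives one permutation for $\ell\in\{1,n\}$ and two otherwise.

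Checking that these two families lie in the set is then routine. The first, $(\ell,\ell+1,\ldots,n,1,\ldots,\ell-1)$, is a rotation of the identity, so it and its second rotation avoid $132$ and $213$. For the second, $(\ell,1,\ldots,\ell-1,\ell+1,\ldots,n)$, the permutation $p$ is increasing after its first entry, and $p^{(2)}$ is increasing before its last entry.
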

\begin{proof}
    Let $p\in S_n^{(2)}(132:213)$. Write $p(1)=\ell$. 
Since $p$ avoids $132$, the subpermutation on $\{\ell+1,\ell+2,\ldots,n\}$ is $(\ell+1,\ell+2,\ldots,n)$. 
Since $p^{(2)}$ avoids $213$, the subpermutation on $\{1,2,\ldots,\ell-1\}$ is $(1,2,\ldots,\ell-1)$. Since $p(2)p(3)\cdots p(n)$ avoids both $132$ and $213$, for all $x,y>\ell$ and $u,v<\ell$ with $x\neq y$ and $u\neq v$, neither $u$ nor $v$ can be located between $x$ and $y$, and vice versa. Otherwise, we would have either a $132$ pattern or a $213$ pattern in $p(2)p(3)\ldots p(n)$. Hence $p$ must be of the form $\ell\alpha\beta$ where $\alpha=(\ell+1,\ell+2,\ldots,n)$ and $\beta=(1,2,\ldots,\ell-1)$, or $\alpha=(1,2,\ldots,\ell-1)$ and $\beta=(\ell+1,\ell+2,\ldots,n)$. If $\ell=1$ or $\ell=n$, then there is one possibility; and if $\ell\neq 1,n$, then there are two possibilities. Adding up all the contributions, we have $|S_n^{(2)}(132:213)|\leq 1+1+2(n-2)=2n-2$.

    We still need to verify that all the permutations $\ell\alpha\beta$ as defined in the previous paragraph belong to $S_n^{(2)}(132:213)$. 
Suppose, by way of contradiction, that $abc$ is a $132$ pattern in $\ell\alpha\beta$. If $a\geq\ell$, then $b>c>\ell$ and $bc$ is a subpermutation of $(\ell+1,\ell+2,\ldots,n)$, which is impossible. If $a<\ell$, then either $c>\ell$ or $c<\ell$. If $c<\ell$, then $abc$ is a subpermutation of $(1,2,\ldots,\ell-1)$, which is impossible; and if $c>\ell$ then $b>c$ and $bc$ is a subpermutation of $(\ell+1,\ell+2,\ldots,n)$, which is also impossible. Hence, $\ell\alpha\beta$ avoids the pattern $132$. 
Using a similar argument, one can also see that $\alpha\beta\ell$ avoids the pattern $213$.
\end{proof}

\begin{proposition}\label{Prop:123;231}
    For all $n\geq2$, we have
    \[
    |S_n^{(2)}(123:231)|=n + 2 \binom{n}{3}.
    \]
\end{proposition}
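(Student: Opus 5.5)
The plan is to reduce the two rotation conditions to a single condition on $\sigma=p(2)p(3)\cdots p(n)$ for each leading term $\ell=p(1)$, and then count by summing over $\ell$. As a sanity check, for $n=3$ the only excluded permutation is $123$ (note $p^{(2)}=231$ forces $p=123$), which gives $5=3+2\binom{3}{3}$. The small cases $n=2,3$ can therefore be checked directly, and we may assume $n\ge4$.

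\textbf{Step 1 (reduction).} Write $p=\ell\sigma$, so $p^{(2)}=\sigma\ell$.
\begin{itemize}
\item $\ell\sigma$ avoids $123$ if and only if $\sigma$ avoids $123$ and there is no $123$ pattern beginning with $\ell$. The latter means the subpermutation of $\sigma$ on $\{\ell+1,\ldots,n\}$ is decreasing.
\item $\sigma\ell$ avoids $231$ if and only if $\sigma$ avoids $231$ and there is no $231$ pattern ending at $\ell$. A $231$ pattern $xy\ell$ needs $\ell<x<y$ with $x$ before $y$, so this is again exactly the condition that the values greater than $\ell$ appear in decreasing order in $\sigma$.
\end{itemize}
Hence $p\in S_n^{(2)}(123:231)$ if and only if $\sigma\in S_{\{1,\ldots,n\}\setminus\{\ell\}}(123,231)$ and the $t:=n-\ell$ largest values of $\sigma$ occur in decreasing order. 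After standardizing, with $m=n-1$, we get
\[
|S_n^{(2)}(123:231)|=\sum_{t=0}^{m} f(m,t),
\]
where $f(m,t)$ is the number of $\tau\in S_m(123,231)$ whose top $t$ values appear in decreasing order. Boundary values: $f(m,0)=f(m,1)=\binom{m}{2}+1$ by Theorem~\ref{Theorem:Pairs3}, and $f(m,m)=1$.

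\textbf{Step 2 (structure of $S_m(123,231)$).} I will use the standard decomposition by the position of $m$. If $\tau=\alpha m\beta$ avoids $231$, every entry of $\alpha$ is smaller than every entry of $\beta$. If $\alpha$ is nonempty, avoiding $123$ forces $\beta$ to be empty and $\alpha$ to avoid $12$, that is, to be decreasing, unless $\alpha$ has a single element. Iterating, $S_m(123,231)$ should consist of:
\begin{itemize}
\item permutations $m\,\tau'$ with $\tau'\in S_{m-1}(123,231)$;
\item the permutations $\delta\, m$ with $\delta$ decreasing;
\item the ``shifted'' forms, in which an initial decreasing run of small values is placed in front of a decreasing run of large values.
\end{itemize}
Concretely, I expect every element to be either decreasing or obtained from $(m,m-1,\ldots,1)$ by choosing values $i<j$ and cutting the decreasing segment from $j$ down to $i$, so that the lower part $(i-1,\ldots,1)$ comes first. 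I will pin this description down exactly, matching the count $\binom{m}{2}+1$, before proceeding.

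\textbf{Step 3 (compute $f(m,t)$ and sum).} Given the explicit list from Step 2 (each non-decreasing element is indexed by a pair $i<j$), the condition ``the top $t$ values are in decreasing order'' becomes a simple inequality on $(i,j)$. Roughly, the inversion-breaking pair must involve a value at most $m-t$, so the condition is something like $j\le m-t$ or $i\le m-t$ depending on the form. Then $f(m,t)=1+\#\{\text{admissible pairs}\}$, which is a binomial coefficient in $m-t$ plus a linear correction. Summing over $t=0,\ldots,m$ with the hockey-stick identity should give $n+2\binom{n}{3}$. The recurrence $f(m,t)=f(m-1,t-1)+(\text{terms with } m \text{ not first})$ can serve as a cross-check.

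The main obstacle is Step 2 together with the translation in Step 3: obtaining an exact, correctly indexed description of $S_m(123,231)$, and reading off precisely which members keep the top $t$ values decreasing, without off-by-one errors at $t=0,1$ and $t=m$. I would first tabulate $m=3,4$ by hand to calibrate the description, and then carry out the summation.
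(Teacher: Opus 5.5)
Your overall route is sound and differs from the paper's. The key step is your Step 1. The two constraints involving $\ell$ (no $123$ beginning at $\ell$ in $p$, no $231$ ending at $\ell$ in $p^{(2)}$) are the same condition. Hence $p\in S_n^{(2)}(123:231)$ if and only if $\sigma\in S(123,231)$ and the $n-\ell$ largest values of $\sigma$ appear in decreasing order.

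The paper uses the $123$ half of this at the outset, but does not exploit that the $231$ constraint at $\ell$ coincides with it. Instead it splits into $\ell=1$, $\ell=2$, $\ell=n$ and $3\le\ell\le n-1$. The last case is subdivided by the shape of the subpermutation $p'$ on $\{1,\dots,\ell-1\}$, with the large values placed relative to $p'$ by hand and membership re-verified in each subcase. Your reduction is an equivalence, so no verification is needed, and every $\ell$ is handled by the single quantity $f(n-1,n-\ell)$. The price is that you need the explicit structure of $S_m(123,231)$, not just the Simion--Schmidt count.

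That structure is where your write-up needs repair. The claim that $\alpha\neq\emptyset$ forces $\beta=\emptyset$ is false: $132$ and $2143$ lie in $S(123,231)$. What holds is the following:
\begin{itemize}
\item $\alpha<\beta$ elementwise, since otherwise $a\,m\,b$ is a $231$;
\item $\alpha$ is decreasing, since otherwise $a\,b\,m$ is a $123$;
\item $\beta$ is decreasing, since otherwise an element of $\alpha$ followed by an ascent of $\beta$ is a $123$.
\end{itemize}
So $\tau=(k,\dots,1,m,\dots,k+1)$ with $1\le k\le m-1$. Iterating the case $\alpha=\emptyset$, the non-decreasing elements of $S_m(123,231)$ are exactly
\[
\tau_{k,j}=(m,\dots,j+1,\;k,\dots,1,\;j,\dots,k+1),\qquad 1\le k<j\le m .
\]
Your ``concrete'' version, read literally with $i<j$, has an off-by-one. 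The choice $i=1$ gives back the decreasing permutation, and the case $i=j$ is missed, e.g.\ $(m-1,\dots,1,m)$.

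With the list above, Step 3 closes. The only non-inversions of $\tau_{k,j}$ are the pairs $x\le k<y\le j$, so its top $t$ values are decreasing if and only if $k\le m-t$. Thus $f(m,t)=1+\sum_{k=1}^{m-t}(m-k)$. Each $k$ is admissible for exactly $m-k+1$ values of $t$, so
\[
\sum_{t=0}^{m}f(m,t)=(m+1)+\sum_{r=0}^{m-1}r(r+1)=(m+1)+2\binom{m+1}{3}=n+2\binom{n}{3}.
\]
This agrees with the paper's per-$\ell$ counts. For example, $f(n-1,n-2)=n-1$ is its Case 2 and $f(n-1,0)=\binom{n-1}{2}+1$ is its Case 3.
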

\begin{proof}
Since $|S_2^{(2)}(123:231)|=2$, the statement is true for $n=2$. 

Now we assume that $n\geq3$. Let $p\in S_n^{(2)}(123:231)$. Write $p(1)=\ell$. Since $p$ avoids $123$, the subpermutation of $p$ on $\{\ell+1,\ell+2,\ldots,n\}$ must be $(n,n-1,\ldots,\ell+1)$. There are four 
disjoint cases depending on $\ell$.

    \textbf{Case 1}: $\ell=1$. Then the only possibility for $p$ is $(1,n,n-1,\ldots,2)$. Since $(1,n,n-1,\ldots,2)$ is a rotation of $(n,n-1,\ldots,1)$, by Lemma~\ref{Lemma:PermutationsAvRotations3}, we have that $(1,n,n-1,\ldots,2)$ avoids $123$ and $231$. Hence, there is one possiblity for $p$ in this case.

    {\bf Case 2}: $\ell=2$. Then $p$ must be of the form $\alpha_i$ where $i=\{2,3,\ldots,n\}$, $\alpha_i(i)=1$, and the subpermutation of $\alpha_i$ on $\{3,4,\ldots,n\}$ is $(n,n-1,\ldots,3)$. Similar to Case 2 in the proof of Proposition~\ref{Prop:123;213}, one can see that $\alpha_i\in S_n^{(2)}(123:231)$ for all $i\in\{2,3,\ldots,n\}$. Hence, there are $n-1$ possibilities for $p$ in this case.

    {\bf Case 3}: $\ell=n$. Then $p$ is of the form $n\beta$ with $\beta\in S_{n-1}(123,231)$. By Theorem~\ref{Theorem:Pairs3}, there are $\binom{n-1}{2}+1$ choices for $\beta$. Since $n$ cannot be the first term of a $123$ pattern in $n\beta$ or the last term of a $231$ pattern in $\beta n$, we see that $n\beta\in S_n^{(2)}(123:231)$ if and only if $\beta\in S_{n-1}(123,231)$. Hence, there are $\binom{n-1}{2}+1$ possibilities for $p$ in this case.

    {\bf Case 4}: $n\geq4$ and $3\leq \ell\leq n-1$. Let $p'$ be the subpermutation of $p$ on $\{1,2,\ldots,\ell-1\}$. Notice that $p'\in S_{\ell-1}(123,231)$. There are three disjoint subcases depending on $p'$.

    {\it Subcase 4.1}: $p'=(\ell-1,\ell-2,\ldots,1)$. In this subcase, $p$ must be of the form $\ell\alpha p'\beta$ where $\alpha\beta=(n,n-1,\ldots,\ell+1)$ because otherwise $p^{(2)}$ would contain a $231$ pattern. There are $n-\ell+1$ such permutations. We need to show that these permutations belong to $S_n^{(2)}(123:231)$. 
Suppose, by way of contradiction, that $abc$ is a $123$ pattern in $\ell\alpha p'\beta$. If $a\geq\ell$, then $bc$ is a subpermutation of $\alpha\beta$, which is impossible; and if $a<\ell$, then either $ab$ is a subpermutation of $p'$ or $bc$ is a subpermutation of $\beta$, which are both impossible. This proves that $\ell\alpha p'\beta$ avoids $123$. Using a similar argument, one can also see that $\alpha p'\beta\ell$ avoids $231$. Hence, for every $\ell\in\{3,4,\ldots,n-1\}$, there are $n-\ell+1$ possibilities for $p$ in this subcase.
    
    {\it Subcase 4.2}: $p'(r)=\ell-1$ with $r\geq2$. We will show that we must have $p'=(r-1,r-2,\ldots,1,\ell-1,\ell-2,\ldots,r)$ in this case. Since $p'$ avoids $231$, for all $x\in\{p'(1),p'(2),\ldots,p'(r-1)\}$ and $y\in \{p'(r+1),p'(r+2),\ldots,p'(\ell-1)\}$, we must have $x<y$. 
It follows that $\{p'(1),p'(2),\ldots,p'(r-1)\}=\{1,2,\ldots,r-1\}$. Since $p'$ avoids $123$, $p'(1)p'(2)\cdots p'(r-1)$ must be decreasing.  
Hence $p'(1)p'(2)\cdots p'(r-1)=(r-1,r-2,\ldots,1)$. Since $r\geq2$, we see that $1$ comes before $\ell-1$. Now if $p'(r+1)p'(r+2)\cdots p'(\ell-1)$ is not decreasing, then we would have a $123$ pattern in $p'$, which is a contradiction. Hence $p'=(r-1,r-2,\ldots,1,\ell-1,\ell-2,\ldots,r)$. Now the only possible locations for $\ell+1,\ell+2,\ldots,n$ are before $p'(1)$ or between $p'(r-1)$ and $p'(r)$. 
Otherwise $p(2)p(3)\cdots p(n)$ either has a $231$ pattern or a $123$ pattern. To summarize, $p$ must be of the form $\ell\alpha_1\beta_1\alpha_2\beta_2$ where $\alpha_1\alpha_2=(n,n-1,\ldots,\ell+1)$, $r\in\{2,3,\ldots,\ell-1\}$, $\beta_1=(r-1,r-2,\ldots,1)$, and $\beta_2=(\ell-1,\ell-2,\ldots,r)$. There are $n-\ell+1$ ways to write $(n,n-1,\ldots,\ell+1)$ as $\alpha_1\alpha_2$, including the cases where $\alpha_1$ and $\alpha_2$ are empty permutations; and there are $\ell-2$ possible values for $r$. It follows that there are $(\ell-2)(n-\ell+2)$ such permutations for each fixed $\ell\in\{3,4,\ldots,n-1\}$. We now show that all these permutations belong to $S_n^{(2)}(123:231)$. 
Suppose, by way of contradiction, that $abc$ is a $123$ pattern in $\ell\alpha_1\beta_1\alpha_2\beta_2$. If $a\geq\ell$, then $c>b>\ell$ and hence $bc$ is a subpermutation of $\alpha_1\alpha_2$, which is a contradiction; and if $a<\ell$, then either $bc$ is a subpermutation of $\alpha_2$ or $abc$ is a subpermutation of $p'$, which are both impossible. This proves that $\ell\alpha_1\beta_1\alpha_2\beta_2$ avoids $123$. One can also see that $\alpha_1\beta_1\alpha_2\beta_2\ell$ avoids $231$ in a similar way. Hence, for every $\ell\in\{3,4,\ldots,n-1\}$, there are $(\ell-2)(n-\ell+2)$ possibilities for $p$ in this subcase.

    {\it Subcase 4.3}: $p'(1)=\ell-1$ but $p'\neq (\ell-1,\ell-2,\ldots,1)$. In this case, there exist $x<y<\ell-1$ such that $x$ comes before $y$. Now let $z\in\{\ell+1,\ell+2,\ldots,n\}$. If $z$ comes after $y$, then $xyz$ is a $123$ pattern; if $z$ is located between $\ell-1$ and $y$, then $(\ell-1,z,y)$ is a $231$ pattern. Hence the only possible locations for $\ell+1,\ell+2,\ldots,n$ are before $p'(1)=\ell-1$. To summarize, $p$ must be of the form $(\ell,\alpha,\ell-1,\beta)$ where $\alpha=(n,n-1,\ldots,\ell+1)$ and $\beta\in S_{\ell-2}(123,231)$ such that $\beta$ is not decreasing. By Theorem~\ref{Theorem:Pairs3}, there are $\binom{\ell-2}{2}+1-1=\binom{\ell-2}{2}$ such permutations. We now show that all these permutations belong to $S_n^{(2)}(123:231)$. 
Suppose, by way of contradiction, that $abc$ is a $123$ pattern in $(\ell,\alpha,\ell-1,\beta)$. If $a\geq\ell$, then $c>b>\ell$ and hence $bc$ is a 
subpermutation of $\alpha$ which is a contradiction; if $a=\ell-1$, then $bc$ is a subpermutation of $\beta$ with $b,c>\ell-1$, which contradicts that $\beta\in S_{\ell-2}$; and if $a<\ell-1$, then $abc$ is a subpermutation of $\beta$, which contradicts that $\beta$ avoids $123$. This proves that $(\ell,\alpha,\ell-1,\beta)$ avoids the pattern $123$. We still need to show that $(\alpha,\ell-1,\beta,\ell)$ avoids $231$. 
Suppose, by way of contradiction, that $abc$ is a $231$ pattern in $(\alpha,\ell-1,\beta,\ell)$. If $c=\ell$, then $\ell<a<b$ and hence $ab$ is a subpermutation of $\alpha$, which is a contradiction; if $c>\ell$, then $\alpha$ has a $231$ pattern which is impossible; and if $c<\ell$, then either $ab$ is a subpermutation of $(\alpha,\ell-1)=(n,n-1,\ldots,\ell+1,\ell-1)$ or $abc$ is a subpermutation of $\beta$, which are both impossible. This proves that $(\ell,\alpha,\ell-1,\beta)$ avoids the pattern $231$. Hence, for every $\ell\in\{3,4,\ldots,n-1\}$, there are $\binom{\ell-2}{2}$ possibilities for $p$ in this subcase.

    Adding all the possibilities together, we have
    \[
    \begin{split}
    |S_n^{(2)}(123:231)|=&1+(n-1)+\binom{n-1}{2}+1+\sum_{\ell=3}^{n-1}\left[(n-\ell+1)+(\ell-2)(n-\ell+1)+\binom{\ell-2}{2}\right]\\=&n + 2 \binom{n}{3}.
    \end{split}
    \]
This completes the proof.
\end{proof}
\begin{remark}\label{Remark:123;231}
The formula in Proposition~\ref{Prop:123;231} counts the integer sequence A116731 in OEIS \cite{OEIS}.
\end{remark}

\begin{proposition}\label{Prop:213;132}
    For all $n\geq2$, we have
    \[
    |S_n^{(2)}(213:132)|=(n+2)\cdot2^{n-3}.
    \]
\end{proposition}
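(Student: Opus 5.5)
Fix $\ell=p(1)$ and write $p=\ell\sigma$ with $\sigma=p(2)p(3)\cdots p(n)$, so that $p^{(2)}=\sigma\ell$. The plan is to show that the two avoidance conditions cut $\sigma$ into two independent blocks, each avoiding $132$ and $213$, and then to apply Theorem~\ref{Theorem:Pairs3}.

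\textbf{Step 1: block structure.} Take $x<\ell<y$. If $x$ came before $y$ in $\sigma$, then $x\,y\,\ell$ would be a $132$ pattern in $p^{(2)}$. Hence every element larger than $\ell$ precedes every element smaller than $\ell$, and $p=\ell\alpha\beta$, where $\alpha$ is a permutation of $\{\ell+1,\ldots,n\}$ and $\beta$ is a permutation of $\{1,\ldots,\ell-1\}$. Since $\alpha$ and $\beta$ are subpermutations of both $p$ and $p^{(2)}$, each of them must avoid $213$ and $132$.

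\textbf{Step 2: converse.} Conversely, take $p=\ell\alpha\beta$ with $\alpha$ and $\beta$ avoiding both $213$ and $132$. I would check that $p$ avoids $213$ and that $\alpha\beta\ell$ avoids $132$ by locating the three entries of a putative pattern among $\{\ell\}$, $\alpha$ and $\beta$.
\begin{itemize}
\item In $p$, suppose $abc$ is a $213$ pattern, so $b<a<c$. If $a=\ell$, then $b\in\beta$ would have to precede $c\in\alpha$, which is impossible. If $a\in\alpha$ and $b\in\beta$, then $c$ lies after $b$, so $c\in\beta$ and $c<\ell<a$, a contradiction. Otherwise $abc$ lies entirely inside $\alpha$ or entirely inside $\beta$.
\item In $\alpha\beta\ell$, a $132$ pattern ending at $\ell$ needs a small element before a large one, which cannot happen. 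A $132$ pattern with $a\in\alpha$ and $b\in\beta$ would need $a<b$, which fails. A $132$ pattern with $a,b\in\alpha$ and $c\in\beta$ would need $c>a$, which fails. Any remaining pattern lies inside a single block.
\end{itemize}
This bookkeeping is the only place requiring care, and it is routine.

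\textbf{Step 3: counting.} By Theorem~\ref{Theorem:Pairs3}, $|S_m(132,213)|=2^{m-1}$ for $m\ge1$; the empty block ($m=0$) contributes $1$. The cases $\ell=1$ and $\ell=n$ each contribute $2^{n-2}$. Each of the $n-2$ values $\ell\in\{2,\ldots,n-1\}$ contributes $2^{n-\ell-1}\cdot 2^{\ell-2}=2^{n-3}$. Therefore
\[
|S_n^{(2)}(213:132)|=2\cdot 2^{n-2}+(n-2)2^{n-3}=(n+2)\cdot 2^{n-3}.
\]
For $n=2$ the middle range is empty and the two end cases each give $1$, so the count is $2=(2+2)\cdot2^{-1}$, consistent with the formula (the intermediate expression $2\cdot2^{n-2}$ would wrongly give $2$ for the ends alone). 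The main obstacle is the case check in Step 2, which confirms that no cross-block pattern survives.
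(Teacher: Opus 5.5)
Your proof is correct and follows the paper's argument. Like the paper, you decompose $p=\ell\alpha\beta$ with both blocks in $S(132,213)$, check the converse by locating the pattern entries among $\ell$, $\alpha$, $\beta$, and sum the Simion--Schmidt counts. The one cosmetic difference is that you force the block structure from $p^{(2)}$ avoiding $132$ (via $xy\ell$), whereas the paper uses $p$ avoiding $213$ (via $\ell xy$); either condition alone suffices. Your closing parenthetical about $n=2$ is spurious, since $2\cdot 2^{n-2}=2$ is exactly the correct count there, but it does not affect the argument.
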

\begin{proof}
    Let $p\in S_n^{(2)}(213:132)$. Write $p(1)=\ell$. Since $p$ avoids $213$, if $x<\ell$ and $y>\ell$, then $y$ comes before $x$. It follows that $p$ is of the form $\ell\alpha\beta$ where $\alpha$ is a permutation on $\{\ell+1,\ell+2,\ldots,n\}$ and $\beta$ is a permutation on $\{1,2,\ldots,\ell-1\}$. Since $\alpha\beta$ avoids both $213$ and $132$, $\alpha$ and $\beta$ both avoid the patterns $213$ and $132$. By Theorem~\ref{Theorem:Pairs3}, there are exactly $2^{n-\ell-1}$ possibilities for $\alpha$ and $2^{\ell-1-1}$ possibilities for $\beta$. So the total number of possible $\ell\alpha\beta$ is
    \[
    2^{n-1-1}+2^{n-1-1}+\sum_{\ell=2}^{n-1}2^{n-\ell-1}2^{\ell-1-1}=(n+2)\cdot2^{n-3}.
    \]
    The first two terms on the left hand side of the above expressions account for the cases when $\ell=1$ and $\ell=n$.

    It remains to show that all the permutations $\ell\alpha\beta$ defined above belong to $S_n^{(2)}(213:132)$. Suppose, by way of contradiction, that $abc$ is a $213$ pattern in $\ell\alpha\beta$. If $a=\ell$, then $b<\ell$ comes before $c>\ell$ in $\ell\alpha\beta$, which is a contradiction; if $a>\ell$, then since $c>a>\ell$, $\alpha$ has a $213$ pattern which is a contradiction; and if $a<\ell$, then by the definition of $\ell\alpha\beta$, we have $b,c<\ell$ and $\beta$ has a $213$ pattern, which is again a contradiction. This proves that $\ell\alpha\beta$ avoids $213$. Using a similar argument, we can also see that $\alpha\beta\ell$ avoids $132$.
\end{proof}

\begin{remark}
The formula in Proposition~\ref{Prop:213;132} counts the integer sequence A045623 in OEIS \cite{OEIS}.
\end{remark}

\section{Concluding Remarks}\label{Section:Conclude}
We studied patterns of length three for pattern avoidance in permutation rotations. A natural next step is to study similar 
questions for patterns of length four. Here we note that, for certain cases of the problems we studied in Sections~\ref{Section:SinglePattern3} and \ref{Section:Rot3Chain}, the Wilf-equivalence classes for patterns of length four are also entirely determined by complements and reverses. To demonstrate this we combine some computational results with Theorem~\ref{Theorem:ComplementReverse}.
\begin{table}[H]
\renewcommand{\arraystretch}{1.5}
\centering
\begin{tabular}{|l||c|c|}
\hline
$q$ &$|S^{(2)}_n(q)|$; $n=4,5,6,7$&$|S^{(3)}_n(q)|$; $n=4,5,6,7$\\\hline\hline
1234, 4321&22, 91, 408, 1936&21, 80, 323, 1366\\\hline
1243, 2134, 3421, 4312&22, 91, 410, 1977&21, 79, 314, 1315\\\hline
1324, 4231&22, 91, 408, 1938&21, 80, 329, 1449\\\hline
1342, 2431, 3124, 4213&22, 91, 413, 2028&21, 79, 320, 1402\\\hline
1423, 2314, 3241, 4132&22, 91, 409, 1958&21, 80, 327, 1425\\\hline
1432, 2341, 3214, 4123&22, 92, 425, 2129&21, 81, 340, 1549\\\hline
2143, 3412&22, 92, 426, 2142&21, 81, 342, 1575\\\hline
2413, 3142&22, 92, 424, 2108&21, 82, 535, 1649\\\hline
\end{tabular}
\caption{$|S^{(2)}_n(q)|$ and $|S^{(3)}_n(q)|$ for $q\in S_4$ and $n\in\{4,5,6,7\}$}\label{Table1}
\end{table}

By Table~\ref{Table1} and Theorem~\ref{Theorem:ComplementReverse}, we have eight Wilf-equivalence classes for $S^{(2)}_n(q)$ with $q\in S_4$ and for $S^{(3)}_n(q)$ with $q\in S_4$,  determined entirely by complements and reverses. 
It would be interesting to see if the 
same result holds for $S^{(k)}_n(q)$ for $k\geq4$. 

\begin{question}
    Is it true that for every $k\geq4$, there are eight Wilf-equivalence classes for $S^{(k)}_n(q)$ with $q\in S_4$? 
\end{question}

\begin{table}[H]
\renewcommand{\arraystretch}{1.5}
\centering
\begin{tabular}{|l||c|}
\hline
$q$ &$|S^{(4)}_n(q:q^{(2)}:q^{(3)}:q^{(4)})|; n=4, 5, 6, 7$\\\hline\hline
1234, 2341, 4321, 3214&23, 94, 369, 1327\\\hline
1432, 2143, 3412, 4123&23, 95, 386, 1446\\\hline
1243, 4312, 1342, 4213&23, 94, 363, 1239\\\hline
2431, 3124, 3421, 2134&23, 94, 361, 1209\\\hline
1324, 3241, 4231, 2314&23, 94, 370, 1352\\\hline
1423, 3142, 2413, 4132&23, 94, 370, 1357\\\hline

\end{tabular}
\caption{$|S^{(4)}_n(q:q^{(2)}:q^{(3)}:q^{(4)})|$ for $q\in S_4$ and $n\in\{4,5,6,7\}$}\label{Table2}
\end{table}

Considering a different direction, it is easier to determine the number of Wilf-equivalence classes for permutations avoiding rotational 4-chains. By Table~\ref{Table2} and Theorem~\ref{Theorem:ComplementReverse}, there are six Wilf-equivalence classes for $S^{(4)}_n(q:q^{(2)}:q^{(3)}:q^{(4)})$ when $q\in S_4$. Again, the Wilf-equivalence classes are determined entirely by complements and reverses. 
However, the enumeration of $S^{(4)}_n(q:q^{(2)}:q^{(3)}:q^{(4)})$ for $q\in S_4$ is still open.

\begin{question}
Is it possible to find exact formulas for $|S^{(4)}_n(q:q^{(2)}:q^{(3)}:q^{(4)})|$ with $n\geq4$ and $q\in S_4$?
\end{question}

\section*{Acknowledgments}
\"{O}. E\u{g}ecio\u{g}lu would like to acknowledge his sabbatical time at Reykjavik University in
2019 during which he had a chance to learn about the combinatorics of
pattern avoidance. Some of the results in this article were previously included in the doctoral dissertation of the second author \cite{Gaiser2024b}. 
The authors used SageMath for computational experiments and some of the SageMath code was developed with the help of Google AI tools. 
M. Yin was supported in part by the Simons Travel Support for Mathematicians Grant 00007227.

\end{document}